\documentclass[11pt,leqno]{amsart}

\usepackage{amsfonts,amssymb}
\usepackage{graphicx,tikz}
\usepackage{tikz-cd} 
\usepackage{float}
\usepackage{amsmath, amsthm, amsfonts}
\usepackage{hyperref}
\usepackage{enumitem}  
\usepackage[capitalise]{cleveref}
\usepackage{comment}
\usepackage{enumitem}
\usepackage{amsrefs}
\usepackage{nicefrac}
\usepackage[official]{eurosym}
\usepackage{graphicx}
\usepackage{nomencl}
\usepackage{amsfonts}
\usepackage{hyperref}
\usepackage{tabto}
\usepackage{amssymb}
\usepackage{fancyhdr}
\usepackage{amscd}
\usepackage[english]{babel}

\usepackage[margin=1.15in]{geometry}

\theoremstyle{plain}

\newtheorem*{theorem*}{Theorem}
\newtheorem{thm}{Theorem}[section]
\newtheorem{pr}[thm]{Proposition}
\newtheorem{cor}[thm]{Corollary}
\newtheorem{lem}[thm]{Lemma}
\newtheorem{theorem}{Theorem}
\newtheorem{corollary}[theorem]{Corollary}

\theoremstyle{definition}

\newcommand\T{{\mathcal{T}}}
\makeatletter
\def\cleardoublepage{\clearpage\if@twoside \ifodd\c@page\else
	\hbox{}
	\thispagestyle{empty}
	\newpage
	\if@twocolumn\hbox{}\newpage\fi\fi\fi}
\makeatother
\DeclareMathOperator{\Aut}{Aut}
\DeclareMathOperator{\St}{St}

\def\GG{G_{\omega}}
\numberwithin{equation}{section}



 \keywords{Groups acting on rooted trees, finite $p$-groups, ramification structures}
 \subjclass[2010]{Primary  20E08;  Secondary 20D15, 14J29}

\begin{document}
	
	\title[Ramification structures for quotients of the Grigorchuk groups]{Ramification structures for quotients of the Grigorchuk groups}

\author[M. Noce]{Marialaura Noce}
\address{Marialaura Noce: Mathematisches Institut, Georg-August Universit\"{a}t zu G\"{o}ttingen, Bunsenstr. 3-5, 37073 G\"{o}ttingen, Germany}
\email{mnoce@unisa.it}

 \author[A. Thillaisundaram]{Anitha Thillaisundaram}
 
 \address{Anitha Thillaisundaram: Centre for Mathematical Sciences, Lund University, S\"{o}lvegatan~18, 223 62 Lund, Sweden}
 \email{anitha.t@cantab.net}

\thanks{The first author is supported by EPSRC, grant number
1652316 and by the Spanish Government grant MTM2017-86802-P, partly with FEDER funds, and by the “National Group for Algebraic and Geometric Structures and their
Applications” (GNSAGA - INdAM). She also acknowledges financial support from a London Mathematical Society Joint Research Groups in the UK (Scheme 3) grant. The second author acknowledges support from  EPSRC, grant EP/T005068/1.}

\begin{abstract}
Groups associated to surfaces isogenous to a higher product of curves can be characterised by a purely group-theoretic condition, which is the existence of a so-called ramification structure.
In this paper, we prove that infinitely many quotients of the Grigorchuk groups admit ramification structures. This gives the first explicit infinite family of 3-generated finite 2-groups with ramification structures.
\end{abstract}

	\maketitle

\section{Introduction}

Let $G$ be a finite group and let $T=(g_1,g_2,\ldots,g_r)$, for some $r\in\mathbb{N}_{\ge 3}$, be a tuple of non-trivial elements of~$G$.
The tuple~$T$ is called a \emph{(spherical) system of generators} of~$G$ if 
$\langle g_1,\ldots,g_r\rangle =G$ and $g_1g_2\cdots g_r=e$, where $e$ denotes the identity element of the group.
 Next, let $\Sigma(T)$ be the union of all conjugates of the cyclic subgroups generated by the elements of~$T$, that is,
\[
\Sigma(T)=\bigcup_{g\in G} \bigcup_{i=1}^r \langle g_i\rangle^g.
\]
We say that two tuples~$T_1$ and~$T_2$ are \emph{disjoint} if $\Sigma(T_1)\cap \Sigma(T_2)=\{e\}$.

An \emph{(unmixed) ramification structure} of size $(r_1,r_2)$ for a finite group~$G$, as defined in~\cite{BCG2}, is a pair $(T_1,T_2)$ of disjoint systems of generators of~$G$, where $|T_1|=r_1$ and $|T_2|=r_2$. Any finite group~$G$ with a ramification structure gives rise to a surface isogenous to a higher product of curves; see~\cite{Catanese}.
 Recall that an algebraic surface $S$ is \emph{isogenous to a higher product of curves} if it is isomorphic to $(C_1\times C_2)/G$, where $C_1$ and $C_2$ are curves of genus at least~$2$, and $G$ is a finite group acting freely on $C_1\times C_2$. Such groups~$G$ that are associated to surfaces isogenous to a higher product of curves are characterised by the existence of a ramification structure.

We are interested only in the case when $r_1=r_2$, so we will simply say \emph{ramification structure} in the sequel.   When $r_1=r_2=3$, one uses the term \emph{Beauville structure} instead of ramification structure. Also, a group admitting a Beauville structure is called a \emph{Beauville group}.

The theory of groups admitting ramification structures has only recently gained attention, with ramification structures being a natural generalisation of the more well-studied Beauville structures. Hence this subject of groups with ramification structures is  closely related to the established topic  of Beauville surfaces and Beauville groups. Originally,  groups with ramification structures were studied in the context of classifying surfaces of complex dimension two isogenous to a product of two curves. These surfaces can be viewed as Riemannian surfaces of real dimension two. In particular, in the definition of ramification structures the parameters $r_1$ and $r_2$  denote  the number of sides of the polygons in the tessellations induced by the group.

Surfaces isogenous to a product of curves were first considered by Catanese in~\cite{Catanese}, and their connection with finite groups was developed by Bauer, Catanese and Grunewald in~\cite{BCG1, BCG3, BCG2}. As mentioned before, there has been a lot of work done concerning Beauville groups; see surveys \cites{ BBF, fai, fai2, jon}, with recent work including~\cites{Fairbairn, Gul2, CF}. In particular,   finite non-abelian simple groups (excluding $A_5$) are known to be Beauville groups (see~\cite{FMP,GM}), and finding finite $p$-groups that are Beauville has received special attention (see \cite{BBF,BBPV2,SV,FAG,GUA, FAGV}). We note that a Beauville group admits ramification structures of size $(r_1,r_2)$ for any $r_1,r_2\ge 3$; see~\cite[Lem.~2.9]{gul}. Beauville groups are necessarily 2-generated.
However, the study of ramification structures in $d$-generator groups, for $d>2$, is very much in its infancy. 
In~\cite{GP2}, Garion and Penegini characterised the abelian groups with ramification structures, and in~\cite{BBPV}, a finite family of $3$-groups with ramification structures was established.
The study of non-abelian nilpotent groups admitting ramification structures that are not Beauville  was initiated in~\cite{gul}; there, finite nilpotent groups possessing a certain nice power structure were considered, and a characterisation for such groups admitting ramification structures was given. 

Groups acting on regular rooted trees were first seen as a source for finite $p$-groups with ramification structures in the work of
 G\"{u}l and Uria-Albizuri~\cite{GUA}. They  showed that quotients of a Grigorchuk-Gupta-Sidki group acting on the $p$-adic tree, for $p$ an odd prime, admit ramification structures if and only if the Grigorchuk-Gupta-Sidki group is periodic.
 The Grigorchuk-Gupta-Sidki (GGS-)groups  were some of the early examples of groups acting on rooted trees, and were also some of the  easily describable examples of infinite finitely generated periodic groups that were constructed in the 1980s. Since then, groups
 acting on regular rooted trees have provided many other interesting and exotic examples, such as infinite finitely generated groups of intermediate growth, infinite finitely generated amenable but not elementary amenable groups, etc.

The most well-known group acting on a regular rooted tree 
is the (first) Grigorchuk group~\cite{GR}. The Grigorchuk group acts on the binary rooted tree~$\T$ and is a 3-generated infinite periodic group with many interesting properties. The Grigorchuk group was generalised in~\cite{Grigorchuk} to a family of Grigorchuk groups~$G_\omega$, for $\omega$ an infinite sequence in $\{0,1,2\}$, which also act on the binary rooted tree. Each group $\GG$ is generated by one rooted automorphism~$a$ which swaps the two maximal subtrees, and two automorphisms $b_\omega$ and $c_\omega$, both of which are defined recursively according to $\omega$ and stabilise the rightmost infinite ray of the tree; see Section~2 for the precise definitions. 

A Grigorchuk group~$\GG$ is periodic if and only if the sequence~$\omega$ has $0,1,2$ appearing infinitely many times. Apart from Grigorchuk's original paper~\cite{Grigorchuk} and \cites{Drum,GZ,MP1,MP2, Petrides}, the non-periodic Grigorchuk groups appear to have received slightly less attention, as compared to the periodic ones; see~\cites{Leonov,Pervova3,Pervova,erschler}. In particular, in~\cite{Pervova} it is proved that the periodic Grigorchuk groups have the congruence subgroup property, but as we will see, this result generalises to all Grigorchuk groups~$\GG$ with $\omega$ not eventually constant; cf. Theorem~\ref{thm:CSP}. Here a group~$G$ acting on a rooted tree is said to have the \emph{congruence subgroup property} if every finite-index subgroup
contains the pointwise stabiliser $\St_G(n)$ of the vertices at some level~$n\in\mathbb{N}$ of the tree. This is parallel to the classical congruence subgroup property
 for subgroups of $\text{SL}(n, \mathbb{Z})$ for $n > 2$. 
 
 Therefore, in light of the congruence subgroup property, the natural quotients of the Grigorchuk groups~$\GG$ to consider are the quotients by the level stabilisers.
In this paper, we show that all but finitely many such quotients admit ramification structures. 
Before stating our result, we need the following notation. Let $\Omega$ be the set of all infinite sequences in $\{0,1,2\}$. For $\omega\in \Omega$ and $k\in\{0,1,2\}$, we define
\[
i_k(\omega)=\min\{n\in\mathbb{N} \mid \omega_n=k\}
\]
if the minimum exists, otherwise  $i_k(\omega)=\infty$.
Let 
\[
N= \max\{i_0(\sigma \omega), i_1(\sigma \omega),i_2(\sigma \omega)\} +4,
\]
and if $i_k(\sigma \omega)=\infty$ for exactly one $k\in\{0,1,2\}$, then we set
\[
\widetilde{N}=\max\{i_j(\sigma \omega) \mid j\ne k\}+4.
\]

\begin{theorem}\label{thm:main}
Let $G:=G_\omega$ be a Grigorchuk group acting on the binary rooted tree. Then the quotient $G/\St_{G}(n)$ admits a ramification structure for all $n\ge M$, where
\[
M=\begin{cases}
\,\,N & \quad\text{if $N$ is finite};\\
\,\,\widetilde{N}  & \quad\text{if $N$ is infinite and $\sigma\omega$ is not the constant sequence};\\
\,\,4 & \quad\text{if $\sigma\omega$ is the constant sequence}.
\end{cases}
\]
\end{theorem}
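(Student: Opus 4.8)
The plan is to construct, for each $n \ge M$, two disjoint spherical systems of generators of the quotient $\bar G := G/\St_G(n)$. The guiding principle is that two systems of generators $T_1, T_2$ are disjoint precisely when no nontrivial element of one system is conjugate into a cyclic subgroup generated by an element of the other; since the elements $a, b_\omega, c_\omega$ and their products have controlled orders in these 2-groups, I would aim to separate the two systems by the \emph{orders} of their constituent elements and by which level-$1$ coordinates are nontrivial. Concretely, I would first record the relevant arithmetic: in $\bar G$ the rooted generator $a$ is an involution, while $b_\omega, c_\omega, d_\omega := b_\omega c_\omega$ are elements whose orders grow with $n$ according to how soon $0,1,2$ appear in $\sigma\omega$ — this is exactly what the thresholds $N$ and $\widetilde N$ encode. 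The quantity $+4$ in the definitions of $N$ and $\widetilde N$ should be understood as guaranteeing enough levels for the orders of the chosen generators to be large and mutually coprime-in-$2$-power-valuation enough that $\Sigma(T_1) \cap \Sigma(T_2) = \{e\}$.

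First I would treat the generic case, when $N$ is finite (so all three symbols eventually appear in $\sigma\omega$, which corresponds to the periodic Grigorchuk groups). Here I would pick $T_1$ to consist of three elements built from $a$ together with $b_\omega$-type elements, arranged so that their product is trivial, e.g.\ a tuple of the shape $(a,\, ab_\omega^{\,i},\, \text{(inverse product)})$, chosen so that $\langle T_1\rangle = \bar G$; the surjectivity is guaranteed because $a$ together with any one of $b_\omega, c_\omega, d_\omega$ generates $G$, hence its quotient. For $T_2$ I would use a second triple manufactured from a different combination, for instance using $c_\omega$ or $d_\omega$ and conjugates by section-shifting, designed so that the cyclic subgroups appearing in $\Sigma(T_2)$ lie in distinct conjugacy classes from those in $\Sigma(T_1)$. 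To verify disjointness I would exploit the branch structure: an element and its conjugates are detected by their images in $\bar G/\St_{\bar G}(1) \cong \langle a\rangle$ and by their sections, and two elements are non-conjugate if their abelianised images or their section-orders differ. The congruence subgroup property (Theorem~\ref{thm:CSP}) ensures these quotients are genuinely finite and that computing in $\bar G$ reduces to a fixed finite level.

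Next I would handle the two degenerate cases. When $N$ is infinite but $\sigma\omega$ is non-constant, exactly one symbol $k$ is missing from $\sigma\omega$; then one of $b_\omega, c_\omega, d_\omega$ becomes an involution (the non-periodic phenomenon), so I lose one source of high-order elements and must rebuild one of the two systems using the surviving high-order generators, which is why the threshold drops to $\widetilde N$ (dropping the missing coordinate from the maximum). When $\sigma\omega$ is constant, the relevant section is essentially a dihedral-type configuration and the orders stabilise immediately, so $M = 4$ suffices; here I would write down explicit small tuples directly. In each case the systems should be chosen uniformly in $n$, so that increasing $n$ only lengthens orders without destroying either generation or disjointness.

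The main obstacle I expect is the \emph{disjointness} verification rather than the generation or the product-equals-identity conditions, both of which are essentially combinatorial bookkeeping. Proving $\Sigma(T_1) \cap \Sigma(T_2) = \{e\}$ requires controlling the entire conjugacy classes of the chosen generators inside $\bar G$, and in a branch $2$-group the conjugacy classes are intricate and level-dependent. The strategy to tame this is to find a conjugation-invariant that cleanly separates the two systems — the most promising candidate being the $2$-adic order of the elements together with their image in a suitable abelian quotient (such as $G/G'$ or a rigid-stabiliser quotient), since conjugation preserves both order and the abelianised image. The role of the additive constant $4$ in $N, \widetilde N$ is presumably to push these orders into a range where the separating invariant is unambiguous; pinning down the exact level at which this separation first occurs, and checking it is not spoiled by the recursive self-similar structure, is the crux of the argument and where the bulk of the careful computation will reside.
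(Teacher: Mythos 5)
Your plan founders at the very first concrete step: you propose to use \emph{triples} for $T_1$ and $T_2$, justified by the claim that ``$a$ together with any one of $b_\omega, c_\omega, d_\omega$ generates $G$''. That claim is false for the Grigorchuk groups: by Lemma~\ref{lem:abelianisation} the abelianisation of $G_\omega$ is $C_2\times C_2\times C_2$, so $\langle a,b_\omega\rangle$ has image of index $2$ in $G_\omega/G_\omega'$ and is a proper subgroup. Worse, a spherical system $(g_1,g_2,g_3)$ with $g_1g_2g_3=e$ generates the same subgroup as $\{g_1,g_2\}$, so a size-$(3,3)$ structure can only exist for a $2$-generated group; since $\St_G(n)\le G'$ for $n\ge m(\omega)+2$ (Lemma~\ref{lem:derived-stab}), the quotients $G/\St_G(n)$ in the relevant range are genuinely $3$-generated and admit no generating triple with trivial product at all. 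This is exactly why the paper's structures have size $(4,4)$ --- for instance $T_1=\{a, b_{\omega}, c_{\omega}ab_{\omega}, ad_{\omega}ab_{\omega}\}$ and $T_2=\{ac_{\omega},c_{\omega},d_{\omega}ac_{\omega},ad_{\omega}ac_{\omega}\}$ --- and why the resulting groups are advertised as non-Beauville. (A smaller slip: $N$ finite means each of $0,1,2$ occurs at least once in $\sigma\omega$, which is weaker than periodicity, so your identification of that case with the periodic groups is also not accurate.)

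Beyond that, the separating invariant you propose (the $2$-power order together with the image in an abelian quotient) is too coarse to carry the disjointness verification, which you correctly identify as the crux but do not actually carry out. Most of the elements that must be separated are the involutions $x^{o(x)/2}$, which lie deep in $G'$, where the abelianised image gives no information, and distinct pairs frequently share the same order. The paper's argument separates conjugacy classes by finer, still conjugation-invariant data: the least level $j$ with $x^{o(x)/2}\notin\St_{G(n)}(j)$ (level stabilisers being normal), and, when those coincide, the shape of the portrait at that level --- how many section entries equal $a$, and whether they occur in one or in both maximal subtrees. Without fixing the tuples explicitly and running this level-by-level comparison through the three cases ($N$ finite; $N$ infinite with $\sigma\omega$ non-constant; $\sigma\omega$ constant), the proposal remains a programme rather than a proof.
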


In particular, our result yields the first explicit infinite family of 3-generated finite $2$-groups with ramification structures none of which are Beauville. We also note that the result of~\cite{GUA} has been extended to two distinct generalisations of the GGS-groups: in~\cite{DGT1} it was shown that quotients of periodic GGS-groups acting on the $p^n$-adic tree, for $p$ any prime and $n\in\mathbb{N}$ admit Beauville structures, and in~\cite{DGT2}, quotients of periodic multi-EGS groups acting on the $p$-adic tree, for $p$ an odd prime, were shown to admit ramification structures that are not Beauville.

For $G$ a finite $p$-group of exponent~$p^e$, recall that $G$ is said to be \emph{semi-$p^{e-1}$-abelian} if, for every $x,y\in G$,
\[
x^{p^{e-1}}=y^{p^{e-1}} \quad\text{if and only if}\quad (xy^{-1})^{p^{e-1}}=1.
\]
The following result is an immediate consequence of~\cite[Thm.~A]{gul}:

\begin{corollary}
Let $G:=G_\omega$ be a Grigorchuk group acting on the binary rooted tree. Then, for $n\ge M$ as defined in Theorem~\ref{thm:main}, the quotient group $G/\St_{G}(n)$ is not semi-$p^{e-1}$-abelian, where $p^e$ is the exponent of the group $G/\St_{G}(n)$.
\end{corollary}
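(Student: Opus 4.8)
The plan is to obtain the corollary as a direct logical combination of Theorem~\ref{thm:main} with the characterisation in~\cite[Thm.~A]{gul}, so the work lies in lining up the two statements rather than in any new computation. First I would record the basic setup for the quotient $Q := G/\St_{G}(n)$. Since $G=G_\omega$ acts on the \emph{binary} rooted tree, the level stabiliser $\St_G(n)$ has index a power of~$2$, so $Q$ is a finite $2$-group; write $2^e$ for its exponent. Hence the relevant prime is $p=2$, and the generic notion ``semi-$p^{e-1}$-abelian'' specialises to ``semi-$2^{e-1}$-abelian'' for~$Q$.

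Next, for every $n\ge M$ (with $M$ as in Theorem~\ref{thm:main}), Theorem~\ref{thm:main} guarantees that $Q$ admits a ramification structure. The content of~\cite[Thm.~A]{gul} that I would invoke is the implication that a finite $p$-group possessing a ramification structure cannot be semi-$p^{e-1}$-abelian. Feeding $Q$ into this implication gives at once that $Q$ is not semi-$2^{e-1}$-abelian, which is exactly the assertion to be proved. Thus the deduction is: ramification structure on $Q$ (from Theorem~\ref{thm:main}) $\Rightarrow$ $Q$ not semi-$2^{e-1}$-abelian (from~\cite[Thm.~A]{gul}).

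The one point I would treat as the main obstacle is confirming that $Q$ falls within the scope of the direction of~\cite[Thm.~A]{gul} that we use, i.e.\ that ``ramification structure $\Rightarrow$ not semi-$p^{e-1}$-abelian'' applies to $Q$ without unverified side hypotheses (the ``nice power structure'' assumption of~\cite{gul} is needed for the converse, constructive direction, not for this one). To reassure myself that the conclusion is forced, I would keep in mind the underlying structural reason: were $Q$ semi-$2^{e-1}$-abelian, then $\Omega_{e-1}(Q)=\{x\in Q : x^{2^{e-1}}=1\}$ would be a \emph{proper} subgroup — closure under products follows directly from the defining biconditional by taking the two elements as $x$ and $y^{-1}$ — so every generating system would have to contain an element of order $2^e$, contributing a fixed order-$2$ power to $\Sigma$, and it is the impossibility of reconciling this with the disjointness of two generating systems that underlies~\cite[Thm.~A]{gul}. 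Once the scope of the cited implication is confirmed, no further argument is required and the corollary follows immediately.
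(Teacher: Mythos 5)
Your proposal is correct and matches the paper's treatment: the paper likewise presents this corollary as an immediate consequence of Theorem~\ref{thm:main} combined with \cite[Thm.~A]{gul}, with no additional argument. Your extra remarks on the scope of the cited implication and the subgroup $\Omega_{e-1}(Q)$ are sensible sanity checks but not part of the paper's (essentially one-line) deduction.
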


\subsection*{Acknowledgements} We thank all referees for suggesting valuable improvements to the exposition of the paper.


\section{Preliminaries}
Let $\T$ be the binary rooted tree,
  meaning that there is a distinguished vertex called the root and all vertices have $2$ children. Everything in this section can be defined for more general rooted trees, but we will not consider this general setting here, and instead refer the reader to~\cite{Handbook} or~\cite{NewHorizons}.
  
  Using the
  alphabet $X = \{0,1\}$, the vertices $u_\nu$ of~$\T$ are
  labelled bijectively by elements $\nu$ of the free
  monoid~$X^*$ and $\T$ is constructed from~$X^*$ as follows: the root of~$\T$
  is labelled by the empty word~$\varnothing$, and for each word $\nu \in X^*$ and letter $x \in X$ there is an edge
  connecting $u_\nu$ to~$u_{\nu x}$.  We say
  that $u_\nu$ precedes $u_\mu$ whenever $\nu$ is a prefix of $\mu$.

  We recall that there is a natural length function  on~$X^*$.  The words
  $\nu$ of length $\lvert \nu \rvert = n$, representing vertices
  $u_\nu$ that are at distance $n$ from the root, are the $n$th
  level vertices and form the \textit{$n$th layer} of the tree. The elements of the boundary $\partial \T$ correspond
  naturally to infinite simple rooted paths, and are in one-to-one correspondence
  with the $2$-adic integers.

  We denote by $\T_u$ the full rooted subtree of~$\T$ that has its root at
  a vertex~$u$ and includes all vertices succeeding~$u$.  For any
  two vertices $u = u_\nu$ and $v = u_\mu$, the map
  $u_{\nu \tau} \mapsto u_{\mu \tau}$, induced by replacing the
  prefix $\nu$ by $\mu$, yields an isomorphism between the
  subtrees $\T_u$ and~$\T_v$.  

  Now every automorphism of $\T$ fixes the root and the orbits of
  $\mathrm{Aut}(\T)$ on the vertices of the tree~$\T$ are precisely its
  layers. For $f \in \mathrm{Aut}(\T)$, the image of a vertex $u$ under
  $f$ is denoted by~$u^f$.  Observe that $f$ induces a faithful action
  on~$X^*$ such that
  $(u_\nu)^f = u_{\nu^f}$.  For $\nu \in X^*$ and
  $x \in X$ we have $(\nu x)^f = \nu^f x'$ where $x' \in X$ is
  uniquely determined by $\nu$ and~$f$.  This induces a permutation
  $f_\nu$ of $X$ so that
  \[
  (\nu x)^f = \nu^f x^{f_\nu}, \quad \text{and hence}
  \quad   (u_{\nu x})^f = u_{\nu^f x^{f_\nu}}.
  \]
The permutation~$f_\nu$ is called the \textit{label} of~$f$ at~$\nu$, and   the  collection  of  all  labels  of~$f$ constitutes the \emph{portrait} of~$f$. There is a one-to-one correspondence between automorphisms of~$\T$ and portraits. We say that the automorphism $f$ is \textit{rooted} if $f_\nu = 1$ for
  $\nu \ne \varnothing$, and we say that $f$ is \textit{directed}, with directed path $\ell \in \partial \T$, if the support $\{ \nu \mid f_\nu \ne 1 \}$ of its labelling is infinite and marks only vertices at distance $1$ from the set of vertices corresponding  to the path~$\ell$. 

When convenient, we do not differentiate between $X^*$ and vertices of~$\T$. In this spirit we define the \textit{section} of~$f$ at a vertex~$u$ to be the unique automorphism $f(u)$ of $\T \cong \T_{|u|}$ given by the condition $(uv)^f = u^f v^{f(u)}$ for $v \in X^*$.


Next, for $G\le \Aut(\T)$, we define the
\textit{vertex stabiliser} $\St_G(u)$ to be the subgroup
consisting of elements in $G$ that fix the vertex~$u$. For
$n \in \mathbb{N}$, the \textit{$n$th level stabiliser}
  $\St_G(n)= \bigcap_{\lvert \nu \rvert =n}
  \St_G(u_\nu)$
is the subgroup consisting of automorphisms that fix all vertices at
level~$n$.  Denoting by $\T_{[n]}$ the finite rooted subtree of~$\T$ of all vertices up to level~$n$, we see that $\St_G(n)$ is equal to
the kernel of the induced action of~$G$ on $\T_{[n]}$.

Now for $n\in\mathbb{N}$, each $g\in \St_{\mathrm{Aut}(\T)} (n)$ can be 
  described completely in terms of its restrictions $g_1,\ldots,g_{2^n}$ to the subtrees
  rooted at vertices at level~$n$.  Indeed, there is a natural
  isomorphism
\[
\psi_n \colon \St_{\mathrm{Aut}(\T)}(n) \rightarrow
\prod\nolimits_{\lvert \nu \rvert = n} \mathrm{Aut}(\T_{u_\nu})
\cong \mathrm{Aut}(\T) \times \overset{2^n}{\dots} \times
\mathrm{Aut}(\T)
\]
given by
\[
g \longmapsto (g_1,\ldots,g_{2^n}).
\]
The restrictions $g_1,\ldots, g_{2^n}$ are actually the sections $g(u)$, for the $n$th level vertices~$u$. Using the identification of~$X^*$ with $\T$, the $n$th level vertices are ordered  lexicographically. Then in the image of~$\psi_n$, the sections $g_1,\ldots,g_{2^n}$ are ordered accordingly.
For ease of notation, we write $\psi=\psi_1$.

For  $\nu\in X^*$, we further define 
\[
\varphi_\nu :\mathrm{St}_{\mathrm{Aut}(\T)}(u_\nu) \rightarrow \mathrm{Aut}(\T_{u_\nu}) \cong \mathrm{Aut}(\T)
\]
to be the natural restriction of $f\in \St_{\Aut(\T)}(u_\nu)$ to the section~$f(u_\nu)$.

For a vertex~$u$ of~$\T$ and $G\le \Aut(\T)$, the \textit{rigid vertex stabiliser} $\mathrm{Rist}_G(u)$ of~$u$ in~$G$ is the subgroup
 consisting of all automorphisms in $G$ that fix
all vertices~$v$ of~$\T$ not succeeding~$u$.
 For $n\in\mathbb{N}$, the \textit{rigid $n$th level stabiliser} is the direct product of the rigid vertex stabilisers of the vertices at level~$n$:
  \[
  \mathrm{Rist}_G(n) = \prod\nolimits_{\lvert \nu \rvert = n}
  \mathrm{Rist}_G(u_\nu) \trianglelefteq G.
  \]

We recall that a level-transitive group $G \le \Aut(\T)$ (i.e.\ a group acting transitively at each level of the tree)  is a \emph{branch
  group}, if $\mathrm{Rist}_G(n)$ has finite index in $G$ for every $n \in \mathbb{N}$. 

\section{The Grigorchuk groups and their properties}
The family of groups $G_{\omega}$ acts on~$\T$ and is defined via the parameter $\omega=(\omega_1, \omega_2, \dots)$, such that  $\omega_i \in \{0,1,2\}$ for any $i\in\mathbb{N}$. We denote by $\Omega$ the set of all such $\omega$ and for $n\in\mathbb{N}$, we write $\sigma^n \omega$ for the sequence $(\omega_{n+1}, \omega_{n+2}, \dots)$. 

For $\omega \in \Omega$, the Grigorchuk group~$G_{\omega}$ is generated by $a, b_{\omega}, c_{\omega}, d_{\omega}$, where $a$ is the rooted automorphism corresponding to the cycle $(1\, 2)$, and the directed automorphisms $ b_{\omega}, c_{\omega}, d_{\omega}$ lie in the first level stabiliser and are defined as follows. 
For $n\in\mathbb{N}\cup \{0\}$, let $u_n=1\,\overset{n}\ldots\, 1\in X^*$. Then 
$b_{\omega}(u_n0)$ is trivial if $\omega_{n+1} = 2$ and is equal to $a$ otherwise; the automorphism~$c_{\omega}(u_n0)$ is trivial if $\omega_{n+1} = 1$ and is equal to $a$ otherwise; and finally $d_{\omega}(u_n0)$ is trivial if $\omega_{n+1} = 0$ and is equal to $a$ otherwise. More precisely: 
$$
\varphi_u(b_{\omega})=
\begin{cases}
a &\quad \mbox{ if } u=u_n0 \mbox{ for some } n \mbox{ and } \omega_{n+1} \neq 2,\\
e &\quad \mbox{ if } u=u_n0 \mbox{ for some } n \mbox{ and } \omega_{n+1} = 2,\\
b_{\sigma^n\omega} &\quad \mbox{ if } u=u_n\text{ for some }n;
\end{cases}
$$
$$
\varphi_u(c_{\omega})=
\begin{cases}
a &\quad \mbox{ if } u=u_n0 \mbox{ for some } n \mbox{ and } \omega_{n+1} \neq 1,\\
e &\quad \mbox{ if } u=u_n0 \mbox{ for some } n \mbox{ and } \omega_{n+1} = 1,\\
c_{\sigma^n\omega} &\quad \mbox{ if } u=u_n \mbox{ for some } n;
\end{cases}
$$
$$
\varphi_u(d_{\omega})=
\begin{cases}
a &\quad \mbox{ if } u=u_n0 \mbox{ for some } n \mbox{ and } \omega_{n+1} \neq 0,\\
e &\quad \mbox{ if } u=u_n0 \mbox{ for some } n \mbox{ and } \omega_{n+1} = 0,\\
d_{\sigma^n\omega} &\quad \mbox{ if } u=u_n \mbox{ for some } n.
\end{cases}
$$

For example, if we take the sequence $(0,\omega_2, \omega_3, \dots)$, we obtain
\[
\psi(b_{\omega})  = (a, b_{\sigma\omega}), \quad
\psi(c_{\omega}) = (a, c_{\sigma\omega}),\quad\text{and}\quad
\psi(d_{\omega}) = (e, d_{\sigma\omega}).
\]
If $\omega=(0,1,2, 0, 1, 2, \dots)$, then $G_{\omega}$ is the first Grigorchuk group.

Notice that for any choice of the sequence~$\omega$, apart from the three constant sequences, all these generators are elements of order~$2$. Furthermore the generators $b_\omega$, $c_\omega$ and $d_\omega$ commute with each other, and we observe the relation $b_\omega c_\omega =d_\omega$ in~$\GG$, which we will use in the sequel without special mention. 

Many properties of~$\GG$ depend on the choice of the sequence~$\omega$. Let $\Omega_0$ denote the set of $\omega\in\Omega$  such that $0$, $1$ and $2$ occur infinitely many times in~$\omega$. The group~$\GG$ is periodic if and only if $\omega\in\Omega_0$, and $\GG$ is branch if and only if $\omega$ is not eventually constant; see~\cite{Grigorchuk}. Moreover, if $\omega$ is eventually
constant then $G_\omega$ is of polynomial growth (as it is virtually abelian), otherwise $G_\omega$ gives a wealth of examples of groups of intermediate growth; see~\cite{Grigorchuk} and~\cite{erschler}. If~$\omega$ is a constant sequence, then $\GG$ is isomorphic to~$D_{\infty}$, the 2-generated infinite dihedral group.

Throughout the paper we will always exclude the case when $\omega$ is a constant sequence. In other words, we redefine $\Omega$ to be the aforementioned set of sequences~$\omega$ apart from the three constant sequences. To determine the ramification structures for quotients of the Grigorchuk groups, we need some preliminary tools. Namely, we determine the orders of specific elements of~$\GG$ for certain $\omega\in \Omega$. We denote by~$o(x)$ the order of an element $x$, and for $\omega\in \Omega$ and $k\in\{0,1,2\}$, we recall that
\[
i_k(\omega)=\min\{n\in\mathbb{N} \mid \omega_n=k\}
\]
and $i_k(\omega)=\infty$ if no minimum exists.
For convenience, we often write $i_0,i_1,i_2$ when $\omega$ is clear.

\begin{lem}\label{lem: order elements}
Let $G_{\omega}$ be a Grigorchuk group for $\omega\in \Omega$.
Then, using the above notation, if~$i_0, i_1, i_2$ are finite, we have
\[
o(ab_{\omega})=2^{i_2+\varepsilon_2}, \quad o(ac_{\omega})=2^{i_1+\varepsilon_1},\quad  o(ad_{\omega})=2^{i_0+\varepsilon_0},
\]
where $\varepsilon_k=0$ if $\sigma^{i_k}\omega=kkk\cdots$ and $\varepsilon_k=1$ otherwise, for $k\in\{0,1,2\}$.

If furthermore $i_0(\sigma \omega), i_1(\sigma\omega), i_2(\sigma\omega)$ are finite, then
\begin{align*}
o(ad_{\omega}ab_{\omega})=o(ab_{\omega}ad_{\omega})=&
\begin{cases}
o(ad_{\sigma\omega}) 
& \text{ if }\omega_1=0,\\
\max\{o(ad_{\sigma\omega}), o(ab_{\sigma\omega})\}
& \text{ if }\omega_1=1,\\
o(ab_{\sigma\omega})
& \text{ if }\omega_1=2;
\end{cases}\\
o(ad_{\omega}ac_{\omega})=o(ac_{\omega}ad_{\omega})=&
\begin{cases}
o(ad_{\sigma\omega})
& \text{ if }\omega_1=0,\\
o(ac_{\sigma\omega})
& \text{ if }\omega_1=1,\\
\max\{o(ad_{\sigma\omega}), o(ac_{\sigma\omega})\}
& \text{ if }\omega_1=2;
\end{cases}\\
o(ac_{\omega}ab_{\omega})=o(ab_{\omega}ac_{\omega})=&
\begin{cases}
\max\{o(ac_{\sigma\omega}), o(ab_{\sigma\omega})\}
& \text{ if }\omega_1=0,\\
o(ac_{\sigma\omega})
& \text{ if }\omega_1=1,\\
o(ab_{\sigma\omega})
& \text{ if }\omega_1=2.
\end{cases}
\end{align*}
\end{lem}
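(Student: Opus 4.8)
The plan is to reduce every order to a first-level section computation, via the wreath recursion together with two elementary facts: if $g\in\Aut(\T)\setminus\St(1)$ then $o(g)=2\,o(g^2)$, since $g$ maps to the nontrivial element of $\Aut(\T)/\St(1)\cong S_2$; and if $h\in\St(1)$ with $\psi(h)=(h_0,h_1)$ then $o(h)=\operatorname{lcm}(o(h_0),o(h_1))$, because $\psi$ is an isomorphism onto a direct product. From the defining portraits I would first record the first-level data $\psi(ab_\omega)=(b_{\sigma\omega},\beta_0)$, $\psi(ac_\omega)=(c_{\sigma\omega},\gamma_0)$ and $\psi(ad_\omega)=(d_{\sigma\omega},\delta_0)$, each with nontrivial top permutation, where $\beta_0=e$ exactly when $\omega_1=2$ (and $\beta_0=a$ otherwise), $\gamma_0=e$ exactly when $\omega_1=1$, and $\delta_0=e$ exactly when $\omega_1=0$. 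Throughout I would exploit the two conjugacy identities $o(xy)=o(yx)$ and $o(xa)=o(ax)$; the former gives all the equalities of the form $o(ad_\omega ab_\omega)=o(ab_\omega ad_\omega)$ for free.

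For the first assertion, squaring $ab_\omega$ yields $\psi((ab_\omega)^2)=(b_{\sigma\omega}\beta_0,\beta_0 b_{\sigma\omega})$, whose two coordinates have equal order since $o(xy)=o(yx)$. When $\omega_1\in\{0,1\}$ we have $\beta_0=a$, so this common order is $o(ab_{\sigma\omega})$ and hence $o(ab_\omega)=2\,o(ab_{\sigma\omega})$; when $\omega_1=2$ we have $\beta_0=e$, so the order is $o(b_{\sigma\omega})$ and $o(ab_\omega)=2\,o(b_{\sigma\omega})$. Applying the first relation repeatedly peels off the initial block of letters different from $2$, which has length $i_2-1$ and contributes a factor $2^{i_2-1}$; the leading letter of $\sigma^{i_2-1}\omega$ is then $2$, and the computation terminates with $o(ab_\omega)=2^{i_2-1}\cdot 2\,o(b_{\sigma^{i_2}\omega})$. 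Since $b_{222\cdots}=e$ while $b_{\sigma^{i_2}\omega}$ has order $2$ whenever $\sigma^{i_2}\omega\ne 222\cdots$, we get $o(b_{\sigma^{i_2}\omega})=2^{\varepsilon_2}$ and therefore $o(ab_\omega)=2^{i_2+\varepsilon_2}$. The expressions for $o(ac_\omega)$ and $o(ad_\omega)$ follow verbatim with the roles of the letter $2$ played instead by $1$ and by $0$.

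For the second assertion I would compute the sections of the products directly; for instance $\psi(ad_\omega\,ab_\omega)=(d_{\sigma\omega}\beta_0,\,\delta_0 b_{\sigma\omega})$, and then take the lcm of the orders of the two coordinates. The standing hypotheses on $\sigma\omega$ guarantee that $b_{\sigma\omega},c_{\sigma\omega},d_{\sigma\omega}$ all have order $2$, and, by the first part applied to $\sigma\omega$, that $o(ab_{\sigma\omega}),o(ac_{\sigma\omega}),o(ad_{\sigma\omega})$ are finite powers of $2$; consequently each lcm is either the single nontrivial order present (when one coordinate has order $2$) or the maximum of two powers of $2$. Running the three cases $\omega_1\in\{0,1,2\}$, which toggle $\beta_0,\gamma_0,\delta_0$ between $a$ and $e$, then reproduces precisely the three displayed case distinctions for $o(ad_\omega ab_\omega)$, $o(ad_\omega ac_\omega)$ and $o(ac_\omega ab_\omega)$. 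The one step needing genuine care is getting the portraits right under the chosen action convention, so that each of $\beta_0,\gamma_0,\delta_0$ lands in the correct coordinate of $\psi$; once that bookkeeping is pinned down, the remaining work is the routine lcm-versus-max and conjugacy manipulations described above.
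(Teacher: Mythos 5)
Your proposal is correct and takes essentially the same approach as the paper: both reduce the computation to first-level sections via the wreath recursion, iterating the squaring step until the relevant letter of $\omega$ first appears (your one-step recursion $o(ab_\omega)=2\,o(ab_{\sigma\omega})$ is just the paper's computation of $\psi_n((ab_\omega)^{2^n})$ unrolled one level at a time), and both handle the products $ad_\omega ab_\omega$, etc.\ by computing $\psi$ of the product and taking the lcm (equivalently, the max of powers of $2$) of the coordinate orders, with the same case analysis on $\omega_1$.
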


\begin{proof}
Consider the element $ab_\omega$. We see that
the components of $\psi_n((ab_\omega)^{2^n})$ are either $ab_{\sigma^n\omega}$ or $b_{\sigma^n\omega}a$, for $1\le n<i_2$, so
$\psi_{i_2-1}((ab_\omega)^{2^{i_2-1}})=(x_1,\ldots,x_{2^{i_2-1}})$
where $x_j\in\{ab_{\sigma^{i_2-1}\omega},b_{\sigma^{i_2-1}\omega}a\}$ for $j\in\{1,\ldots, 2^{i_2-1}\}$. 

If $\sigma^{i_2}\omega=22\cdots$, then $b_{\sigma^{i_2-1}\omega}$ is the trivial automorphism. Hence $\psi_{i_2-1}((ab_\omega)^{2^{i_2-1}})=(a,\overset{2^{i_2-1}}\ldots,a)$ and thus $o(ab_\omega)=2^{i_2}$.

If $\sigma^{i_2}\omega\ne 22\cdots$, then $
\psi_{i_2}((ab_\omega)^{2^{i_2}})=(b_{\sigma^{i_2}\omega},\,\overset{2^{i_2}}\ldots\,,b_{\sigma^{i_2}\omega})$
and as  $o(b_{\sigma^{i_2}\omega})=2$, it follows that $o(ab_\omega)=2^{i_2+1}$. 

Similarly for $ac_\omega$ and $ad_\omega$.

Lastly, for $ad_{\omega}ab_{\omega}=((ab_{\omega}ad_{\omega})^a)^{-1}$, we note that 
\[
\psi(ad_{\omega}ab_{\omega})=
\begin{cases}
(d_{\sigma\omega}a, b_{\sigma\omega}) & \text{ if }\omega_1=0,\\
(d_{\sigma\omega}a, ab_{\sigma\omega}) & \text{ if }\omega_1=1,\\
(d_{\sigma\omega}, ab_{\sigma\omega}) & \text{ if }\omega_1=2.
\end{cases}
\]
Thus the result follows, and similarly for the remaining cases.
\end{proof}

To end this section, we collect together a few observations. The next result follows from \cite[Prop.~1]{Pervova_AT}.
\begin{lem}\label{lem:abelianisation}
Let $G_{\omega}$ be a Grigorchuk group for $\omega\in \Omega$. Then 
\[
\GG/\GG'\cong\langle a\rangle \times \langle b_\omega, c_\omega\rangle \cong \langle a\rangle \times \langle b_\omega, d_\omega\rangle\cong \langle a\rangle \times \langle c_\omega, d_\omega\rangle  \cong C_2\times C_2\times C_2.
\]
\end{lem}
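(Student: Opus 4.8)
The plan is to compute the abelianisation directly from the presentation-free structure of $G_\omega$ inside $\Aut(\T)$, avoiding the need for an explicit presentation (which is unavailable, as $G_\omega$ is not finitely presented). First I record the easy upper bound. Since $\omega$ is non-constant, the generators $a, b_\omega, c_\omega, d_\omega$ all have order~$2$, so $G_\omega/G_\omega'$ is an elementary abelian $2$-group, i.e.\ an $\mathbb{F}_2$-vector space $V$. Writing the images additively and using $b_\omega c_\omega = d_\omega$, we get $\bar d_\omega = \bar b_\omega + \bar c_\omega$, so $V$ is spanned by $\bar a, \bar b_\omega, \bar c_\omega$ and hence $\dim_{\mathbb{F}_2} V \le 3$.

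The heart of the argument is a matching lower bound, for which I would exhibit three $\mathbb{F}_2$-linearly independent homomorphisms $G_\omega \to C_2$. The key device is that such homomorphisms can be imported from the ambient group $\Aut(\T)$: for each $n \ge 0$ define $s_n \colon \Aut(\T) \to C_2$ by summing (in $C_2 \cong \mathrm{Sym}(X)$) the labels $g_\nu$ of $g$ over all level-$n$ vertices $\nu$. Using the composition rule $(gh)_\nu = g_\nu h_{\nu^g}$ together with the fact that $g$ permutes the level-$n$ vertices bijectively, one checks in one line that $s_n$ is a homomorphism; restricting to $G_\omega$ then yields genuine homomorphisms with no knowledge of the relations of $G_\omega$.

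Next I would evaluate $s_n$ on the generators, reading off the portraits from the recursive definitions of $b_\omega, c_\omega, d_\omega$. One finds $s_0(a) = 1$ and $s_0(b_\omega) = s_0(c_\omega) = s_0(d_\omega)=0$ (the directed generators fix the first level), while for $n \ge 1$ the only level-$n$ vertex carrying a nontrivial label of $b_\omega$ is $1^{n-1}0$, giving $s_n(b_\omega) = 1$ iff $\omega_n \ne 2$, and likewise $s_n(c_\omega)=1$ iff $\omega_n \ne 1$ and $s_n(d_\omega)=1$ iff $\omega_n \ne 0$ (with $s_n(a)=0$). Thus, viewed as functionals on $V$ in the coordinates $(\bar a, \bar b_\omega, \bar c_\omega)$, the map $s_0$ is $(1,0,0)$, while for $n\ge 1$ the map $s_n$ equals $(0,1,1)$, $(0,1,0)$ or $(0,0,1)$ according as $\omega_n = 0$, $1$ or $2$.

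Finally I would invoke non-constancy: since $\omega$ takes at least two distinct values, at least two of the three functionals $(0,1,1),(0,1,0),(0,0,1)$ occur among the $s_n$; as these are precisely the three nonzero vectors of the plane $\{0\}\times \mathbb{F}_2^2$, any two of them are independent and span that plane. Together with $s_0=(1,0,0)$ this gives three independent functionals, so $\dim_{\mathbb{F}_2} V \ge 3$, whence $V \cong C_2^3$ with basis $\bar a, \bar b_\omega, \bar c_\omega$. The three displayed decompositions follow at once: $\langle \bar a\rangle$ is a direct complement to $\langle \bar b_\omega, \bar c_\omega\rangle$, and since $\bar d_\omega = \bar b_\omega + \bar c_\omega$ we have $\langle \bar b_\omega, \bar c_\omega\rangle = \langle \bar b_\omega, \bar d_\omega\rangle = \langle \bar c_\omega, \bar d_\omega\rangle$. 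The main obstacle is the lower bound, and specifically realising that the needed characters come for free from the level-label homomorphisms $s_n$ of $\Aut(\T)$ rather than from an unavailable presentation of $G_\omega$; once that is in hand, the non-constancy hypothesis is exactly what supplies the third independent character (and its failure for constant $\omega$ is what drops the dimension to $2$, matching $D_\infty$).
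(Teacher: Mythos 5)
Your argument is correct, but it is not the paper's: the paper gives no proof at all, deducing the lemma in one line from \cite[Prop.~1]{Pervova_AT} (Pervova's computation of abelianisations of AT-groups). Your route is a self-contained replacement. The upper bound ($\dim_{\mathbb{F}_2}\GG/\GG'\le 3$ from the involutivity of the generators and $b_\omega c_\omega=d_\omega$) is routine, and your lower bound is sound: the level-$n$ sign maps $s_n$ are homomorphisms on all of $\Aut(\T)$ by the cocycle rule $(gh)_\nu=g_\nu h_{\nu^g}$ and the fact that $g$ permutes the $n$th layer, and your evaluation of $s_n$ on the generators is right --- the unique level-$n$ vertex in the support of $b_\omega$ is $1^{n-1}0$, carrying a nontrivial label exactly when $\omega_n\ne 2$, and similarly for $c_\omega$, $d_\omega$ (note the consistency check $[\omega_n\ne 2]+[\omega_n\ne 1]=[\omega_n\ne 0]$ over $\mathbb{F}_2$, matching $\bar d_\omega=\bar b_\omega+\bar c_\omega$). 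Non-constancy of $\omega$ then produces two distinct nonzero vectors of $\{0\}\times\mathbb{F}_2^{\,2}$, which over $\mathbb{F}_2$ are automatically independent, and together with $s_0=(1,0,0)$ this forces $\dim_{\mathbb{F}_2}\GG/\GG'\ge 3$. What your approach buys is transparency: it avoids an external citation, works uniformly for every non-constant $\omega$, and makes visible exactly why the constant sequences are excluded (only one of the three functionals $(0,1,1),(0,1,0),(0,0,1)$ survives, dropping the abelianisation to $C_2\times C_2$, consistent with $\GG\cong D_\infty$). The cost is length relative to the paper's one-line reference, but the argument is complete and could stand as a proof of the lemma.
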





We recall the following definitions from~\cite{Pervova}. For any given $\omega\in\Omega$, in the group~$\GG$ exactly one of the generators $b_\omega$, $c_\omega$, and $d_\omega$ has trivial label at the vertex $0\in X$. This generator will be called the \emph{$d$-generator} of~$\GG$. Next we let
\[
m(\omega)=\max\{n\in \mathbb{N}\mid \omega_1=\cdots=\omega_n\}.
\]
Recall that we always exclude constant sequences, and therefore $m(\omega)$ is always finite. Let $y_\omega$ be the generator of~$\GG$ such that $y_{\sigma^{m(\omega)}\omega}$ is the $d$-generator of $G_{\sigma^{m(\omega)}\omega}$. We call $y_\omega$ the \emph{$c$-generator} of~$\GG$. 

The four results that we collect below 
were  proved in~\cite[Cor.~1.1, Prop.~2.1, Lem.~3.1 and Prop.~3.1]{Pervova} for periodic Grigorchuk groups, but it can be checked that the proofs hold more generally for all Grigorchuk groups, with natural and minor modifications for the case when $\sigma^{m(\omega)}\omega$ is a constant sequence.

\begin{cor}\label{cor:abelianisation}
Let $G_{\omega}$ be a Grigorchuk group for $\omega\in \Omega$. Write  $x_\omega$, respectively $y_\omega$, for the $d$-generator, respectively $c$-generator, of~$\GG$, and let $N=\langle x_\omega\rangle^{G_\omega}$ be the normal closure of~$x_\omega$ in~$\GG$. Then
$\GG/N \cong \langle a,y_\omega\rangle$.
\end{cor}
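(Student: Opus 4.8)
We must prove that if $x_\omega$ is the $d$-generator and $y_\omega$ is the $c$-generator of $\GG$, and $N=\langle x_\omega\rangle^{G_\omega}$, then $\GG/N\cong\langle a,y_\omega\rangle$. The plan is to show two things: first, that $\GG=\langle a, x_\omega, y_\omega\rangle$, so that modding out by $N$ (which contains $x_\omega$) leaves a quotient generated by the images of $a$ and $y_\omega$; and second, that the images of $a$ and $y_\omega$ in $\GG/N$ generate a copy of the abstract group $\langle a, y_\omega\rangle$, i.e. that no nontrivial relation is forced upon them by the quotient beyond those already holding. The isomorphism $\GG/N\cong\langle a,y_\omega\rangle$ should be read as saying that the third generator is genuinely killed and the remaining two survive freely (subject only to their intrinsic relations).

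\textbf{First step: generation.}
I would begin by recalling from Lemma~\ref{lem:abelianisation} that $\GG/\GG'\cong C_2\times C_2\times C_2$, with the three factors coming from $a$ and any two of $b_\omega,c_\omega,d_\omega$. Since $b_\omega c_\omega=d_\omega$ and all three pairwise generate the same rank-$2$ subgroup modulo $\GG'$, the set $\{a, x_\omega, y_\omega\}$ (being $a$ together with two distinct generators among $b_\omega,c_\omega,d_\omega$) generates $\GG$. Hence in $\GG/N$, where $\bar{x}_\omega=1$, the quotient is generated by $\bar a$ and $\bar y_\omega$, giving the surjection $\langle a,y_\omega\rangle\twoheadrightarrow \GG/N$ (here $\langle a,y_\omega\rangle$ is the subgroup of $\GG$ generated by these two elements).

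\textbf{Second step: injectivity.}
The crux is to show this surjection is injective, equivalently that $\langle a,y_\omega\rangle\cap N=\{e\}$ inside $\GG$. The natural strategy, following the recursive structure of the Grigorchuk groups, is induction on $\omega$ via the shift $\sigma$. The key recursive fact is that the $d$-generator and $c$-generator behave predictably under $\psi$ and the shift: after passing to the section at an appropriate vertex (or after $m(\omega)$ shifts so that $\sigma^{m(\omega)}\omega$ has $x_{\sigma^{m(\omega)}\omega}$ as its $d$-generator), the roles of the generators permute in a controlled way. I would compute $\psi(N)$ and $\psi(\langle a,y_\omega\rangle)$ and track how the normal closure of the $d$-generator decomposes into sections, reducing the claim for $\omega$ to the analogous claim for $\sigma\omega$, with the base case handled by the minor modification noted in the text for when $\sigma^{m(\omega)}\omega$ is constant (where $G$ is dihedral and the statement is a direct check).

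\textbf{Main obstacle.}
I expect the principal difficulty to lie in the injectivity step, specifically in controlling $N=\langle x_\omega\rangle^{G_\omega}$ under the isomorphism $\psi$: one must verify that conjugates of $x_\omega$ project, section by section, into the normal closure of the corresponding $d$-generator at the next level, and that nothing from $\langle a,y_\omega\rangle$ leaks into $N$. Since this is precisely the content transported from \cite[Prop.~3.1]{Pervova}, I would lean on the stated fact that Pervova's proof carries over with only the constant-sequence base case requiring separate attention; the real work is checking that the recursion on $\sigma\omega$ closes up correctly and that the index/order bookkeeping (using Lemma~\ref{lem: order elements} for the orders of elements like $ay_\omega$) is consistent at each stage.
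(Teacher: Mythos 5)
Your outline matches the paper's treatment of this corollary: the paper supplies no independent proof, deriving the statement from Pervova's Cor.~1.1 with the remark that the argument extends to all non-constant $\omega$ after minor modifications when $\sigma^{m(\omega)}\omega$ is constant, and your generation-plus-injectivity reduction (with the injectivity $\langle a,y_\omega\rangle\cap N=\{e\}$ handled by recursion along the shift $\sigma$) is exactly the shape of that cited argument. The only caveat is that, like the paper, you ultimately defer the substantive step to Pervova rather than executing the induction, so neither account is self-contained; your added observation that $\{a,x_\omega,y_\omega\}$ generates $\GG$ (hence $G=NH$ for $H=\langle a,y_\omega\rangle$) is correct and is the easy half.
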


\begin{pr}\label{pr:rist}
Let $G_{\omega}$ be a Grigorchuk group for $\omega\in \Omega$ and let $x_\omega$ be the $d$-generator of~$\GG$.
For $n\in\mathbb{N}$, let $z_{\sigma^{n-1}\omega}$ be the $d$-generator of~$G_{\sigma^{n-1}\omega}$, and let $N_{\sigma^n\omega}$ denote the normal closure of $(az_{\sigma^n\omega})^2$ in~$G_{\sigma^n\omega}$. Then for any vertex~$u$ of length~$n$, 
\[
\varphi_u(\textup{Rist}_{\GG}(u))=\begin{cases}
\langle x_{\sigma^n\omega}\rangle^{G_{\sigma^n\omega}} N_{\sigma^n\omega} & \text{ if }1\le n\le m(\omega),\\
N_{\sigma^n\omega} & \text{ if }n>m(\omega).
\end{cases}
\]
\end{pr}

\begin{lem}\label{lem:derived-stab}
Let $G_{\omega}$ be a Grigorchuk group for $\omega\in \Omega$. Then $\St_{\GG}(m(\omega)+2)\le \GG'$.
\end{lem}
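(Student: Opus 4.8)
The plan is to detect membership in $\GG'$ through the abelianisation. By Lemma~\ref{lem:abelianisation} we have $\GG/\GG'\cong C_2\times C_2\times C_2$, so $\GG'$ is the common kernel of the three homomorphisms $\alpha,\beta,\gamma\colon\GG\to C_2$ recording the parities of the number of occurrences of $a$, $b_\omega$, $c_\omega$ respectively in a word for a given element (so that $d_\omega=b_\omega c_\omega$ contributes to both $\beta$ and $\gamma$). Since $\ker\alpha=\St_{\GG}(1)$, every element of $\St_{\GG}(m(\omega)+2)$ already lies in $\ker\alpha$, and it remains to show that $\beta$ and $\gamma$ vanish on $\St_{\GG}(m(\omega)+2)$.

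The engine of the proof is a recursion for $\beta$ and $\gamma$ under $\psi$. Writing $\beta',\gamma'$ for the corresponding homomorphisms of $G_{\sigma\omega}$, I claim that for every $g\in\St_{\GG}(1)$ with $\psi(g)=(g_0,g_1)$ one has $\beta(g)=\beta'(g_0)+\beta'(g_1)$ and $\gamma(g)=\gamma'(g_0)+\gamma'(g_1)$. This is verified by evaluating both sides on the Schreier generators $b_\omega,c_\omega,d_\omega,b_\omega^a,c_\omega^a,d_\omega^a$ of $\St_{\GG}(1)$ and reading off their images under $\psi$; the identity holds regardless of the value of $\omega_1$, because $b_\omega$ (respectively $c_\omega$) always has $b_{\sigma\omega}$ (respectively $c_{\sigma\omega}$) as its section at the vertex~$1$. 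I will then argue by induction on $m:=m(\omega)$. For the inductive step $m\ge 2$ one has $m(\sigma\omega)=m-1$, so by induction $\St_{G_{\sigma\omega}}(m+1)\le G_{\sigma\omega}'$; given $g\in\St_{\GG}(m+2)$, its sections satisfy $g_0,g_1\in\St_{G_{\sigma\omega}}(m+1)\le G_{\sigma\omega}'$, whence $\beta'(g_i)=\gamma'(g_i)=0$ and the recursion forces $\beta(g)=\gamma(g)=0$; with $\alpha(g)=0$ this places $g$ in $\GG'$.

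The base case $m=1$ is the main obstacle, because there the recursion only expresses $\beta(g)$ as a sum of section-invariants that need not vanish individually, and iterating it merely pushes the problem down the tree. To force the required cancellation I will use the extra linear relations satisfied by $\psi(\St_{\GG}(1))$ as a subdirect subgroup of $G_{\sigma\omega}\times G_{\sigma\omega}$, cut out in the abelianised picture by two relations. A short computation on the six Schreier generators shows that for $h\in\St_{\GG}(1)$ the $a$-parity $\alpha(h_0)$ of one section equals, in the abelianisation of $G_{\sigma\omega}$, the parity $\beta'(h_1)+\gamma'(h_1)$ of the other when $\omega_1=0$, and equals $\beta'(h_1)$ when $\omega_1=1$, respectively $\gamma'(h_1)$ when $\omega_1=2$ (and symmetrically with the roles of $h_0,h_1$ interchanged).

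Assuming $\omega_1=0$ and $\omega_2\ne 0$ (the other constant-run values follow by the symmetry permuting $\{0,1,2\}$, and hence $\{b_\omega,c_\omega,d_\omega\}$), take $g\in\St_{\GG}(3)$. Since $g_0,g_1\in\St_{G_{\sigma\omega}}(1)$ have trivial $a$-parity, the $\omega_1=0$ relation gives $\beta'(g_i)=\gamma'(g_i)$ for $i=0,1$. Applying the analogous relation one level further down to each $g_i\in\St_{G_{\sigma\omega}}(2)$, whose own sections again have trivial $a$-parity, forces the $\beta''$-parity (if $\omega_2=1$) or the $\gamma''$-parity (if $\omega_2=2$) of the level-two sections to vanish; the recursion then yields $\beta'(g_i)=0$, respectively $\gamma'(g_i)=0$. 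In either case, combined with $\beta'(g_i)=\gamma'(g_i)$, both invariants of each $g_i$ vanish, so $\beta(g)=\gamma(g)=0$ and $g\in\GG'$. Finally I will record the minor modifications needed when $\sigma^{m(\omega)}\omega$ is a constant sequence, where one of $b_{\sigma^{m(\omega)}\omega},c_{\sigma^{m(\omega)}\omega},d_{\sigma^{m(\omega)}\omega}$ is trivial and the corresponding invariant becomes vacuous, so that the relevant instance of the recursion holds trivially.
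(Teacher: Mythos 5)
The paper itself does not prove this lemma --- it cites Pervova's Lemma~3.1 for the periodic case and asserts that the proof extends ``with natural and minor modifications'' when $\sigma^{m(\omega)}\omega$ is constant --- so your self-contained argument via abelianisation invariants is welcome. In the generic situation your proof is correct: the three parity homomorphisms $\alpha,\beta,\gamma$ do cut out $\GG'$, the recursion $\beta(g)=\beta'(g_0)+\beta'(g_1)$ and the cross-relations (e.g.\ $\alpha'(h_0)=\beta'(h_1)+\gamma'(h_1)$ when $\omega_1=0$) are homomorphism identities that one only needs to verify on the six Schreier generators of $\St_{\GG}(1)$, the induction on $m(\omega)$ with $m(\sigma\omega)=m(\omega)-1$ is sound, and the base case $m(\omega)=1$ is correctly closed by combining the level-one and level-two relations. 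I checked all of these identities on the generators for each value of $\omega_1$ and they hold \emph{provided the shifted sequences involved are not constant}.

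The gap is precisely in the degenerate case you defer to the end, and your description of the needed modification is wrong in character, not merely terse. Take $\omega=(0,1,1,1,\dots)$, so $m(\omega)=1$ and $G_{\sigma\omega}\cong D_\infty$ with $c_{\sigma\omega}=e$ and $d_{\sigma\omega}=b_{\sigma\omega}$. The invariant $\gamma$ on $\GG$ is \emph{not} vacuous (it is a genuine surjection onto $C_2$ that you must still kill on $\St_{\GG}(3)$), and the recursion $\gamma(g)=\gamma'(g_0)+\gamma'(g_1)$ does not ``hold trivially'': checking it on the generator $c_\omega$, whose image under $\psi$ is $(a,e)$, forces $\gamma'(a)=1$, so the correct substitute for $\gamma'$ is $\alpha'+\beta'$ rather than the zero map. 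Worse, $\psi(\St_{\GG}(1))$ here contains $(a,e)$, $(e,a)$, $(b_{\sigma\omega},e)$ and $(e,b_{\sigma\omega})$ and is therefore the \emph{full} direct product $G_{\sigma\omega}\times G_{\sigma\omega}$, so no cross-relation between the two sections exists at all: your $\omega_1=0$ relation fails on $h=c_\omega$ for every choice of $\gamma'$. The same breakdown occurs one level lower for sequences such as $\omega=(0,1,2,2,2,\dots)$, where $\sigma^2\omega$ is constant and the level-two relations you invoke fail. The statement is still true in these cases (for instance, for $\omega=(0,1,1,\dots)$ one can use the corrected recursion together with the separately verified fact that $\St_{D_\infty}(2)\le D_\infty'$), but this requires replacing, not trivialising, your identities, so the base case needs to be reworked for all eventually constant $\omega$ before the proof is complete.
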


Recall that we say a subgroup~$H$ of~$\GG$ is a \emph{congruence subgroup} if $H$ contains $\St_{\GG}(n)$ for some $n\in\mathbb{N}$.

\begin{pr}\label{pr:congruence-subgroup}
Let $G_{\omega}$ be a Grigorchuk group for $\omega\in \Omega$. Then for each $x_\omega\in\{b_\omega, c_\omega, d_\omega\}$, the normal closure of~$(x_\omega a)^2$ is a congruence subgroup.
\end{pr}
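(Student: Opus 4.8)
The plan is to show directly that the normal closure $M:=\langle (x_\omega a)^2\rangle^{G_\omega}$ contains a level stabiliser $\St_{G_\omega}(n)$. Two elementary reductions come first. Since $(a x_\omega)^2 = a (x_\omega a)^2 a^{-1}$ is conjugate to $(x_\omega a)^2$, the two normal closures coincide and I work with $(a x_\omega)^2$; and since $(a x_\omega)^2\in\St_{G_\omega}(1)\trianglelefteq G_\omega$, we have $M\le\St_{G_\omega}(1)$. The descent engine is the pullback identity
\[
\St_{G_\omega}(n+1)=\psi^{-1}\big((\St_{G_{\sigma\omega}}(n)\times\St_{G_{\sigma\omega}}(n))\cap\psi(\St_{G_\omega}(1))\big),
\]
valid because a section of an element of $\St_{G_\omega}(1)$ fixes level $n$ exactly when the element fixes level $n+1$. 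As $\psi$ is injective on $\St_{G_\omega}(1)$, to prove $\St_{G_\omega}(n+1)\le M$ it suffices to show $\psi(M)\supseteq\psi(\St_{G_\omega}(1))\cap(\St_{G_{\sigma\omega}}(n)\times\St_{G_{\sigma\omega}}(n))$.

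Next I compute the sections of the generator. Write $t:=\varphi_1(x_\omega)$, the corresponding letter-generator of $G_{\sigma\omega}$, of order two since $\sigma\omega$ is non-constant. If $x_\omega$ is the $d$-generator of $G_\omega$ (trivial section at $0$), then $\psi(x_\omega)=(e,t)$ and $\psi((a x_\omega)^2)=(t,t)$; otherwise $\varphi_0(x_\omega)=a$ and $\psi((a x_\omega)^2)=(ta,at)$, the two entries being conjugate copies of $ta$ (indeed $at=(ta)^t$). Thus $M$ is the normal closure of an element whose two sections are, depending on the case, a single generator $t$ placed diagonally, or the element $ta$ of $G_{\sigma\omega}$.

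The heart of the argument is to saturate this single generator to a full direct product. Conjugating $(a x_\omega)^2$ by the rigid vertex stabiliser $\Rist_{G_\omega}(0)$ acts on the first subtree while fixing the second, and by Proposition~\ref{pr:rist} (with $n=1\le m(\omega)$) one has $\varphi_0(\Rist_{G_\omega}(0))=\langle x_{\sigma\omega}\rangle^{G_{\sigma\omega}}N_{\sigma\omega}=:R_{\sigma\omega}$. Using that $t$ has order two and, in the clean case $\omega_1=\omega_2$, lies in $R_{\sigma\omega}$, multiplying such conjugates against $(a x_\omega)^2$ yields elements supported on the first coordinate; symmetrically for the second, while conjugation by all of $\St_{G_\omega}(1)$ (which projects onto $G_{\sigma\omega}$ in each coordinate by fractality) spreads these out. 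Together with the diagonal element this should give $\psi(M)\supseteq R_{\sigma\omega}\times R_{\sigma\omega}$ in the clean case (where $N_{\sigma\omega}\le\langle t\rangle^{G_{\sigma\omega}}$), and $\langle (ta)^2\rangle^{G_{\sigma\omega}}$ in each slot in the non-$d$-generator case. Since $R_{\sigma\omega}\times R_{\sigma\omega}=\psi(\Rist_{G_\omega}(1))\le\psi(\St_{G_\omega}(1))$, the pullback above then forces $\St_{G_\omega}(n+1)\le M$ as soon as $R_{\sigma\omega}$ is a congruence subgroup of $G_{\sigma\omega}$ with $\St_{G_{\sigma\omega}}(n)\le R_{\sigma\omega}$.

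This reduces the claim for $(\omega,x_\omega)$ to the same claim one shift down for $G_{\sigma\omega}$, with the generator either the $d$-generator $z_{\sigma\omega}$ (so $R_{\sigma\omega}\supseteq N_{\sigma\omega}=\langle(az_{\sigma\omega})^2\rangle^{G_{\sigma\omega}}$) or the letter-$t$ generator (giving $\langle(ta)^2\rangle^{G_{\sigma\omega}}$). I would run this as a recursion controlled by $m(\omega)$ and the first-occurrence indices $i_k(\sigma^j\omega)$: when $\omega_1=\omega_2$ the $d$-generator stays a $d$-generator and $m$ drops by one, whereas a change of symbol turns the generator into a non-$d$-generator and replaces $t$ by $ta$. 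Because $\omega$ is not eventually constant, all three symbols recur, the indices $i_k(\sigma^j\omega)$ are finite, and the relevant element orders stay finite by Lemma~\ref{lem: order elements}; after boundedly many shifts (the bound being of the shape of $N$, $\widetilde N$ in Theorem~\ref{thm:main}) the rigid structure becomes uniform (the $n>m(\omega)$ regime of Proposition~\ref{pr:rist}) and the recursion closes, after which Lemma~\ref{lem:derived-stab} and the branch property of $G_\omega$ convert the accumulated rigid-stabiliser containments into a genuine level-stabiliser containment; the degenerate case where $\sigma^{m(\omega)}\omega$ is constant (so $G_{\sigma^{m(\omega)}\omega}\cong D_\infty$ is not branch) is handled separately by a direct computation. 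I expect the main obstacle to be the saturation step of the third paragraph — upgrading the single diagonal (or $ta$-type) generator to the full product $R_{\sigma\omega}\times R_{\sigma\omega}$ — since one must verify that coordinatewise conjugation by rigid vertex stabilisers exhausts all of $R_{\sigma\omega}$ rather than only a proper subgroup such as $[R_{\sigma\omega},t]$, together with the bookkeeping needed to make the shift-recursion terminate uniformly in $\omega$.
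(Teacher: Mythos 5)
The first thing to note is that the paper does not actually prove this proposition: it is one of four statements imported from Pervova's preprint (Prop.~3.1 there), with only the remark that the proofs carry over to non-periodic $\omega$. So your attempt has to stand on its own. Its overall shape --- replace $(x_\omega a)^2$ by its conjugate $(ax_\omega)^2$, compute the first-level sections, use Proposition~\ref{pr:rist} and a shift recursion to trap a level stabiliser --- is the right one and matches the cited argument in spirit.

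However, the pivotal ``saturation'' step is not merely unproven but false as stated. You claim that in the clean case $\psi(M)\supseteq R_{\sigma\omega}\times R_{\sigma\omega}=\psi(\Rist_{G_\omega}(1))$, which by injectivity of $\psi$ on $\St_{G_\omega}(1)$ is the assertion $M\supseteq\Rist_{G_\omega}(1)$. This cannot hold: since $a^2=x_\omega^2=e$ we have $(ax_\omega)^2=[a,x_\omega]$, so $M=\langle(ax_\omega)^2\rangle^{G_\omega}\le G_\omega'$; on the other hand, in the clean case $(x_{\sigma\omega},e)=\psi(x_\omega^{\,a})$ lies in $R_{\sigma\omega}\times R_{\sigma\omega}$ by Proposition~\ref{pr:rist}, and $x_\omega^{\,a}$ has the same nontrivial image as $x_\omega$ in $G_\omega/G_\omega'\cong C_2\times C_2\times C_2$ (Lemma~\ref{lem:abelianisation}), so $\Rist_{G_\omega}(1)\not\le G_\omega'$. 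Equivalently, since $(ax_\omega)^2=x_\omega^{\,a}x_\omega$, putting $(t,e)=\psi(x_\omega^{\,a})$ into $\psi(M)$ would force $x_\omega\in M\le G_\omega'$, a contradiction. This is exactly the obstruction you flagged yourself --- coordinatewise conjugation and commutation can only produce commutator-type elements such as $[R_{\sigma\omega},t]$ in a single slot, never the generator $t$ itself --- and it cannot be repaired by cleverer conjugation, because the abelianisation obstruction is absolute. The recursion therefore has to run through the normal closures of the \emph{squares} $(a\cdot(\text{generator}))^2$ at every level, i.e.\ through the subgroups $N_{\sigma^n\omega}$ of Proposition~\ref{pr:rist} and single-coordinate commutator elements of the kind produced in the proof of Theorem~\ref{thm:CSP}, closed off using Lemma~\ref{lem:derived-stab} and Corollary~\ref{cor:abelianisation}. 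As written, the reduction ``to the same claim one shift down'' rests on the false containment, and the termination of the recursion together with the case where $\sigma^{m(\omega)}\omega$ is constant are asserted rather than carried out; so the argument has a genuine gap at its central step.
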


We recall that a Grigorchuk group~$\GG$ is branch if and only if $\omega$ is not eventually constant.

\begin{thm}
\label{thm:CSP}
Let $G_{\omega}$ be a branch Grigorchuk group for $\omega\in \Omega$. Then $\GG$ has the congruence subgroup property and $\GG$ is just infinite.
\end{thm}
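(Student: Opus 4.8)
The plan is to reduce both assertions to a single containment of level stabilisers in derived rigid stabilisers, and then to obtain that containment from the self-similar data collected above. Throughout write $G:=\GG$ and use the standard structure theorem for branch groups (see e.g.\ \cite{Handbook}): \emph{every nontrivial normal subgroup $N\trianglelefteq G$ contains $\Rist_{G}(n)'$ for some $n\in\mathbb{N}$.} Granting this, I claim it suffices to prove
\[
(\star)\qquad \text{for every } n>m(\omega)\text{ there is } m\in\mathbb{N}\text{ with } \St_{G}(m)\le \Rist_{G}(n)'.
\]
Indeed, rigid stabilisers are nested, $\Rist_{G}(n+1)\le\Rist_{G}(n)$, so for any $k$ one may take $n=\max\{k,m(\omega)+1\}$ and deduce $\St_{G}(m)\le\Rist_{G}(n)'\le\Rist_{G}(k)'$. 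If $H\le G$ has finite index, its normal core $N$ is a nontrivial (as $G$ is infinite) normal subgroup of finite index, whence $N\ge\Rist_{G}(k)'\ge\St_{G}(m)$ and $H\ge\St_{G}(m)$; this is the congruence subgroup property. Moreover, any nontrivial normal subgroup contains some $\St_{G}(m)$, which has finite index, so $G$ is just infinite.

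To prove $(\star)$, fix $n>m(\omega)$ and write $\Rist_{G}(n)=\prod_{|u|=n}\Rist_{G}(u)$, so $\Rist_{G}(n)'=\prod_{|u|=n}\Rist_{G}(u)'$. Each $\varphi_u$ is injective on $\Rist_{G}(u)$, since its elements act trivially off $\T_u$, and by Proposition~\ref{pr:rist} we have $\varphi_u(\Rist_{G}(u))=N_{\sigma^n\omega}$, hence $\varphi_u(\Rist_{G}(u)')=N_{\sigma^n\omega}'$. Now let $g\in\St_{G}(m)$ with $m>n$. Since $\psi_n$ is injective on $\St_{G}(n)$, the element $g$ lies in $\Rist_{G}(n)'$ as soon as each section $\varphi_u(g)$ lies in $N_{\sigma^n\omega}'$: choose $h_u\in\Rist_{G}(u)'$ with $\varphi_u(h_u)=\varphi_u(g)$ and observe that $\psi_n\bigl(\prod_{|u|=n}h_u\bigr)=\psi_n(g)$, forcing $g=\prod_{|u|=n}h_u$. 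As $\varphi_u(g)\in\St_{G_{\sigma^n\omega}}(m-n)$, claim $(\star)$ follows, with $m=n+s$, from
\[
(\star\star)\qquad \St_{G_{\sigma^n\omega}}(s)\le N_{\sigma^n\omega}'\quad\text{for some } s\in\mathbb{N}.
\]

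Thus everything reduces to $(\star\star)$: \emph{the derived subgroup of the congruence subgroup $N_{\sigma^n\omega}$ is again a congruence subgroup of $G_{\sigma^n\omega}$.} Since $\omega$ is not eventually constant, neither is any shift $\sigma^n\omega$, so each $G_{\sigma^n\omega}$ is itself a branch Grigorchuk group; in particular Proposition~\ref{pr:congruence-subgroup}, applied with the $d$-generator $z_{\sigma^n\omega}$, already gives $\St_{G_{\sigma^n\omega}}(t)\le N_{\sigma^n\omega}$ for some $t$. To upgrade this to the derived subgroup I would, via the self-similar structure encoded in Proposition~\ref{pr:rist}, show that (a finite-index piece of) $N_{\sigma^n\omega}$ is self-replicating under $\psi$, relating it to the analogous subgroups at level $n+1$, and then run the usual regular-branch argument, feeding in Lemma~\ref{lem:derived-stab} applied to the shifted groups to pass from a level stabiliser to the derived subgroup. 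This is precisely Pervova's argument in~\cite{Pervova}, which goes through for all non-eventually-constant~$\omega$.

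I expect the main obstacle to be $(\star\star)$, and within it the \emph{uniformity} across the infinitely many shifted sequences $\sigma^n\omega$: the congruence depth $t$ and the constants $m(\sigma^n\omega)$ entering Lemma~\ref{lem:derived-stab} both depend on $n$, so one must check that the bootstrapping terminates at a finite (if $n$-dependent) level $s$. Pervova's periodicity hypothesis is used only to bound such constants, and lifting it is exactly the ``natural and minor modifications'' referred to before the statement.
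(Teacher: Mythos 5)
Your reduction is sound and, after unwinding, coincides with the paper's framework: a finite-index (or non-trivial normal) subgroup contains some $\Rist_{\GG}(n)'$, the derived rigid level stabiliser decomposes as $\prod_{|u|=n}\Rist_{\GG}(u)'$ with $\varphi_u(\Rist_{\GG}(u)')=N_{\sigma^n\omega}'$ by Proposition~\ref{pr:rist}, and everything comes down to your $(\star\star)$, namely that $N_{\sigma^n\omega}'$ contains a level stabiliser of $G_{\sigma^n\omega}$. (Your derivation of just-infiniteness directly from $(\star)$ is a clean alternative to the paper's citation of Francoeur.) The problem is that $(\star\star)$ is not a step you can outsource: it is the entire content of the theorem, and your proposal stops exactly there. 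Proposition~\ref{pr:congruence-subgroup} gives $\St_{G_{\sigma^n\omega}}(t)\le N_{\sigma^n\omega}$, but passing from $N$ to $N'$ is where all the work lies, and the appeal to ``Pervova's argument, which goes through for all non-eventually-constant $\omega$'' begs the question: Pervova proves this only for periodic $\omega$ (all three letters occurring infinitely often), and the extension to arbitrary branch $G_\omega$ is precisely what this theorem adds. The difficulty is not merely the uniformity of constants you flag at the end; it is that the first-level decompositions of the $d$- and $c$-generators $z_{\sigma^k\omega}$, $y_{\sigma^k\omega}$ change shape with $k$ (the label $a$ may sit under either generator, or under both), so there is no single self-replication identity for $N_{\sigma^n\omega}$. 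The actual proof must compute explicit commutators of $(az_{\sigma^n\omega})^2$ with conjugates of elements of the form $\psi^{-1}\bigl((e,(ay_{\sigma^{n+1}\omega})^2)^g\bigr)$ and split into several subcases according to which of $z,y$ carries the label $a$ at the next one or two levels, in one branch descending as far as level $n+4$ before a copy of some $N_{\sigma^{k}\omega}$ lands in a single coordinate of $\Rist_{\GG}(n)'$.

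A secondary concern: the mechanism you propose for closing the gap --- ``feed in Lemma~\ref{lem:derived-stab} applied to the shifted groups to pass from a level stabiliser to the derived subgroup'' --- is not obviously the right tool. Lemma~\ref{lem:derived-stab} compares $\St_{\GG}(m(\omega)+2)$ with $\GG'$, not with $N'$ for $N$ the normal closure of $(az_\omega)^2$, and it plays no role in the argument that actually works; the relevant input is instead the identification $\varphi_u(\Rist_{\GG}(u))=N_{\sigma^n\omega}$ from Proposition~\ref{pr:rist} together with the case-by-case commutator calculus described above. As written, your proposal correctly isolates the hard step but does not prove it, and the sketch offered for it points at the wrong lemma.
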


\begin{proof}
The second part follows from~\cite[Thm.~4.4.4]{Francoeur}, where it was shown that finitely generated branch groups with the congruence subgroup property are always just infinite. To prove the first statement, let $H$ be a finite-index subgroup. By~\cite[Cor.~2.3]{BSZ}, for some $n\in\mathbb{N}$ we have $\text{Rist}_{\GG}(n)'\le H$. Using the notation of Proposition~\ref{pr:rist}, let $z_{\sigma^{n-1}\omega}$ be the $d$-generator of~$G_{\sigma^{n-1}\omega}$. Further let $y_{\sigma^{n-1}\omega}$ be the $c$-generator of~$G_{\sigma^{n-1}\omega}$. 



If $\psi(z_{\sigma^n\omega})=(a,z_{\sigma^{n+1}\omega})$ then $\psi(y_{\sigma^n\omega})=(e,y_{\sigma^{n+1}\omega})$ and $\psi_{n+1}(\text{Rist}_{\GG}(n+1))\ge 1\times \overset{2^{n+1}-1}\ldots \times1\times N_{\sigma^{n+1}\omega}$ by Proposition~\ref{pr:rist}, where $N_{\sigma^{n+1}\omega}$ is the normal closure of~$(ay_{\sigma^{n+1}\omega})^2$. Therefore
$\big[(az_{\sigma^n\omega})^2, \psi^{-1}\big((e, (a y_{\sigma^{n+1}\omega}a y_{\sigma^{n+1}\omega})^g)\big)\big]\in N_{\sigma^n\omega}'= \varphi_v(\text{Rist}_{\GG}(v)')$ for any element $g\in G_{\sigma^{n+1}\omega}$ and $n$th-level vertex~$v$.  In order to deduce that  $H$ is a congruence subgroup, by Proposition~\ref{pr:congruence-subgroup}  it suffices to show that  $\psi_{k}^{-1}(1\times \overset{2^{k}-1}\ldots \times1\times N_{\sigma^{k}\omega}) \le \text{Rist}_{\GG}(n)'$ for some $k >n$.

\textit{Subcase (a):} If $\psi(z_{\sigma^{n+1}\omega})=(a,z_{\sigma^{n+2}\omega})$ and $\psi(y_{\sigma^{n+1}\omega})=(e,y_{\sigma^{n+2}\omega})$, 
as
\[
\psi\Big(\big[(az_{\sigma^n\omega})^2, \psi^{-1}\big((e, (a y_{\sigma^{n+1}\omega} a y_{\sigma^{n+1}\omega})^{z_{\sigma^{n+1}\omega}})\big)\big]\Big) =\Big(e, \big[az_{\sigma^{n+1}\omega},((a y_{\sigma^{n+1}\omega})^2)^{z_{\sigma^{n+1}\omega}}\big]\Big)
\]
and
\[
\big[az_{\sigma^{n+1}\omega},((a y_{\sigma^{n+1}\omega})^2)^{z_{\sigma^{n+1}\omega}}\big]=\psi^{-1}\Big(\big(e,((ay_{\sigma^{n+2}\omega})^2)^{z_{\sigma^{n+2}\omega}}\big)\Big),
\]
we obtain
\[
\psi_2^{-1}\Big(\big(e,e,e,(ay_{\sigma^{n+2}\omega})^2\big) \Big)\in \varphi_v(\text{Rist}_{\GG}(v)') 
\]
for an $n$th-level vertex~$v$.


\textit{Subcase (b):} If $\psi(z_{\sigma^{n+1}\omega})=(a,z_{\sigma^{n+2}\omega})$ and $\psi(y_{\sigma^{n+1}\omega})=(a,y_{\sigma^{n+2}\omega})$, writing $x_{\sigma^{n+1}\omega}=y_{\sigma^{n+1}\omega}z_{\sigma^{n+1}\omega}$, we consider
\begin{align*}
\Big( \big[\, \psi(az_{\sigma^n\omega}az_{\sigma^n\omega}),  &\,\big(e, (ay_{\sigma^{n+1}\omega}ay_{\sigma^{n+1}\omega})^{x_{\sigma^{n+1}\omega}}\big) \,\big]^{\psi(z_{\sigma^n\omega})} \Big)\\ &\qquad=\Big( e,\big[az_{\sigma^{n+1}\omega}, (ay_{\sigma^{n+1}\omega} ay_{\sigma^{n+1}\omega})^{x_{\sigma^{n+1}\omega}} \big]^{z_{\sigma^{n+1}\omega}} \Big)\\
&\qquad=\Big(e, \big( z_{\sigma^{n+1}\omega}^{\,a} y_{\sigma^{n+1}\omega}x_{\sigma^{n+1}\omega}^{\,a} y_{\sigma^{n+1}\omega} y_{\sigma^{n+1}\omega}^{\,a}\big)\Big)\\
&\qquad=\Big(e, \psi^{-1}\big(((ax_{\sigma^{n+2}\omega}ax_{\sigma^{n+2}\omega})^{z_{\sigma^{n+2}\omega}},e)\big)\Big).
\end{align*}

\textit{Subcase (c):} If $\psi(z_{\sigma^{n+1}\omega})=(e,z_{\sigma^{n+2}\omega})$ and $\psi(y_{\sigma^{n+1}\omega})=(a,y_{\sigma^{n+2}\omega})$, we recall  that 
$$\psi^{-1}\big(((z_{\sigma^{n+1}\omega}a)^2\,,\,(az_{\sigma^{n+1}\omega})^2)\big)=(az_{\sigma^n\omega})^4\in\varphi_v(\text{Rist}_{\GG}(v)),
$$
for any $n$th-level vertex~$v$, and so
\begin{equation}\label{eq:only-z}
\psi_2^{-1}\big((z_{\sigma^{n+2}\omega},z_{\sigma^{n+2}\omega},z_{\sigma^{n+2}\omega},z_{\sigma^{n+2}\omega})\big)\in\varphi_v(\text{Rist}_{\GG}(v)).
\end{equation}
Without loss of generality, we may assume that $\psi(z_{\sigma^{n+2}\omega})=(a,z_{\sigma^{n+3}\omega})$ and $\psi(y_{\sigma^{n+2}\omega})=(e,y_{\sigma^{n+3}\omega})$. Indeed, if $\psi(z_{\sigma^{n+2}\omega})=(e,z_{\sigma^{n+3}\omega})$ and $\psi(y_{\sigma^{n+2}\omega})=(a,y_{\sigma^{n+3}\omega})$, then replacing $n$ with $n+1$, and correspondingly swapping the letter~$z$ with $y$, we may proceed as in Subcase (a). Similarly, we proceed as in Subcase (b) if $\psi(z_{\sigma^{n+2}\omega})=(a,z_{\sigma^{n+3}\omega})$ and $\psi(y_{\sigma^{n+2}\omega})=(a,y_{\sigma^{n+3}\omega})$. Hence by our assumption, we have 
\[
\psi^{-1}\Big(\big(e, (ay_{\sigma^{n+3}\omega})^2\big)\Big) \in \varphi_w(\text{Rist}_{\GG}(w) )
\]
for $w$ an $(n+2)$nd-level vertex. Taking the commutator of this with the element in~\eqref{eq:only-z}, we obtain
\[
\psi_{n+3}^{-1} \Big(\big(e,\overset{2^{n+3}-1}\ldots,e, [z_{\sigma^{n+3}\omega},(ay_{\sigma^{n+3}\omega})^2 ]\big)\Big) \in  \text{Rist}_{\GG}(n)'. 
\]
As mentioned above, we may also assume that $\psi(z_{\sigma^{n+3}\omega})=(e,z_{\sigma^{n+4}\omega})$ and $\psi(y_{\sigma^{n+3}\omega})=(a,y_{\sigma^{n+4}\omega})$. From
\begin{align*}
    \psi\big([z_{\sigma^{n+3}\omega},(ay_{\sigma^{n+3}\omega})^2 ] \big) &= \psi\big(z_{\sigma^{n+3}\omega}y_{\sigma^{n+3}\omega} y_{\sigma^{n+3}\omega}^{\,a} z_{\sigma^{n+3}\omega} y_{\sigma^{n+3}\omega}^{\,a}y_{\sigma^{n+3}\omega} \big)\\
    &=\big( e, z_{\sigma^{n+4}\omega}y_{\sigma^{n+4}\omega} az_{\sigma^{n+4}\omega} a y_{\sigma^{n+4}\omega} \big)\\
    &=\big( e, ((z_{\sigma^{n+4}\omega}a)^2)^{y_{\sigma^{n+4}\omega}} \big),
\end{align*}
the result follows.




\medskip

For the remaining case where $\psi(z_{\sigma^n\omega})=(e,z_{\sigma^{n+1}\omega})$ and $\psi(y_{\sigma^n\omega})=(a,y_{\sigma^{n+1}\omega})$, we proceed analogously to the above, replacing $n-1$ with~$n$ in the argument.
\end{proof}

We note that the remaining Grigorchuk groups are also just infinite. Indeed, writing $k=\min\{n\in\mathbb{N}\mid \sigma^n\omega \text{ is constant}\}$ we note that $\GG/\St_{\GG}(k)$ is finite and that
$$
\St_{\GG}(k)\le G_{\sigma^k\omega} \times\overset{2^k}\ldots \times G_{\sigma^k\omega}\cong D_\infty \times\overset{2^k}\ldots \times D_\infty
$$
is a just infinite group, since $D_\infty$ is just infinite and $\GG$ acts transitively at every level of the tree.


\section{Ramification structures for quotients of the Grigorchuk groups}

In this section, we prove Theorem~\ref{thm:main} by giving an explicit description of the ramification structures that quotients of Grigorchuk groups possess. For convenience, we will simply write $G=\GG$ for a given Grigorchuk group and we denote by $G(n)$ the quotient $G/\St_{G}(n)$ for $n\in\mathbb{N}$. In the following, by abuse of notation, we will still write $a,b_\omega, c_\omega, d_\omega$ for their respective images in~$G(n)$. Further, for $k\in\mathbb{N}$ we also write $\psi_k$ for the corresponding map when working with the quotient~$G(n)$.

\begin{proof} [Proof of Theorem~\ref{thm:main}] 
We recall $N= \max\{i_0(\sigma \omega), i_1(\sigma \omega),i_2(\sigma \omega)\} +4$. First we consider the case when $N$ is finite and we let $n\ge N$. 
We set 
$$
T_1=\{a, b_{\omega}, c_{\omega}ab_{\omega}, ad_{\omega}ab_{\omega}\}\quad\text{and}\quad T_2=\{ac_{\omega},c_{\omega},d_{\omega}ac_{\omega},ad_{\omega}ac_{\omega}\},
$$
which both yield systems of generators for~$G(n)$. We claim that the pair $(T_1,T_2)$ is  a ramification structure for $G(n)$ for $n\ge N$. To this end, we are going to show that
\begin{equation*}\label{eq:intersection}\tag{\mbox{$\mathsection$}}
\langle x \rangle \cap \langle y \rangle^g = \{e\}
\end{equation*}
for any $x \in T_1$, $y \in T_2$, and $g \in G(n)$; in fact, 
it is enough to show that
$$
\langle x^{\nicefrac{o(x)}{2}} \rangle \cap \langle y^{\nicefrac{o(y)}{2}} \rangle^g = \{e\}.
$$

First observe that $c_\omega ab_\omega = (ad_\omega)^{c_\omega}$ and $d_\omega ac_\omega = (ab_\omega)^{d_\omega}$. Therefore, for the purpose of showing that (\ref{eq:intersection}) holds, we may assume that 
\[
T_1=\{a, b_{\omega}, ad_{\omega}, ad_{\omega}ab_{\omega}\}\quad\text{and}\quad T_2=\{ac_{\omega},c_\omega,ab_\omega,ad_{\omega}ac_\omega\}.
\]
It is easy to see that $\langle a \rangle \cap  \langle y \rangle^g = \{e\}$ for all $y \in T_2$ and $g\in G(n)$, since $a$ is not in the first level stabiliser and the elements $(ac_\omega)^2$, $c_\omega$, $(ab_\omega)^2$ and $ad_\omega ac_\omega$ are. 

Now suppose that $x=b_\omega$. 
If $y=ac_\omega$ and $o(ac_\omega)=2^{i_1+1}$, we are done since $(ac_\omega)^{2^{i_1}}\in \St_{G(n)}(i_1+1)$ from the proof of Lemma~\ref{lem: order elements}, but $b_\omega\not\in\St_{G(n)}(i_1+1)$. If $o(ac_\omega)=2^{i_1}$, then we are likewise done, as $\psi_{i_1-1}((ac_\omega)^{2^{i_1-1}})=(a,\ldots,a)$ whereas the portrait of~$b_\omega$ at the same level has at most one label with~$a$.
If $y=c_\omega$, we have that the portrait of~$c_\omega$ at level $i_2$ consists of non-trivial permutations, but the portrait of~$b_\omega$ at the same level has only trivial permutations. Hence \eqref{eq:intersection} holds here. Suppose $y=ab_\omega$. If $i_2>1$, we have $(ab_\omega)^{o(ab_\omega)/2}\in \St_{G(n)}(2)$ whereas $b_\omega\not\in\St_{G(n)}(2)$. If $i_2=1$, then writing $i=\min\{i_0,i_1\}$, we have
$\psi_i((ab_\omega)^2)$ has exactly two components consisting of the element~$a$, however $\psi_i(b_\omega)$ only has one such component. It thus follows that (\ref{eq:intersection}) holds for this choice of $x$ and $y$. Finally let $y=ad_\omega ac_\omega$ and denote its order by~$2^t$. If $i_2>\max\{i_0,i_1\}$, then (\ref{eq:intersection}) holds since $b_\omega\not\in\St_{G(n)}(2)$ but $(ad_\omega ac_\omega)^{2^{t-1}}\in\St_{G(n)}(2)$.
Likewise for the cases $1=i_0<i_2<i_1$ and $1=i_1<i_2<i_0$. Lastly, if $i_2=1$, without loss of generality suppose that $i_0<i_1$. Assume first that we are not in the case $i_1=i_0+1$ with $\omega_i=1$ for all $i_0< i\le n-1$.
Then the claim follows noting that $b_\omega\notin\St_{G(n)}(i_0+1)$ 
whereas $(ad_\omega a c_\omega)^{2^{t-1}}\in\St_{G(n)}(i_0+1)$. For the remaining case, we have $i_0>2$ and
$\psi_{i_0}((ad_\omega a c_\omega)^{2^{t-1}})=(d,\overset{2^{i_0-1}}\ldots,d,a,\overset{2^{i_0-1}}\ldots,a)$, which shows that $(ad_\omega a c_\omega)^{2^{t-1}}$ cannot be conjugate to~$b_\omega$.

Next, we let $x=ad_\omega$. Similar arguments as above follow for $y=c_\omega$. So suppose $y=ac_\omega$, and without loss of generality we assume that $i_1>i_0$; the reverse case follows similarly. If $o(ac_\omega)=2^{i_1+1}$, we have $(ac_\omega)^{2^{i_1}}\in\St_{G(n)}(i_1+1)$, whereas  $(ad_\omega)^{2^{i_0}}\notin\St_{G(n)}(i_1+1)$. If $o(ac_\omega)=2^{i_1}$, then as $\psi_{i_0}((ad_\omega)^{2^{i_0}})=(d,\overset{2^{i_0}}\ldots,d)$, we are done upon comparing this with the form of $\psi_{i_1-1}((ac_\omega)^{2^{i_1-1}})$ given above. Thus (\ref{eq:intersection}) holds for this choice of~$x$ and~$y$. Likewise for $y=ab_\omega$. We consider now the remaining element $y=ad_\omega ac_\omega$. Either $i_1<i_0$ or $i_1>i_0$. In the former, writing $2^s$ for the order of~$ad_\omega$ in $G(n)$, let $j$ be minimal such that $(ad_\omega)^{2^s}\notin\St_{G(n)}(j)$. We observe that  $\psi_{j-1}((ad_\omega)^{2^s})$ has the element~$a$  appearing in both maximal subtrees, whereas for $(ad_\omega ac_\omega)^{2^{t-1}}$ and the corresponding minimal $j'$ such that  $(ad_\omega ac_\omega)^{2^{t-1}}\notin\St_{G(n)}(j')$, we have that $\psi_{j'-1}((ad_\omega ac_\omega)^{2^{t-1}})$ has~$a$ appearing only in one of the two maximal subtrees. Similarly for the case $i_1>i_0$. Hence
(\ref{eq:intersection}) holds.

Finally we set $x=ad_\omega ab_\omega$. Likewise, arguments as above show that (\ref{eq:intersection}) holds for $y\in\{ac_\omega, c_\omega ,ab_\omega\}$. Hence we suppose $y=ad_\omega a c_\omega$. 
Then (\ref{eq:intersection}) follows from considering the components of $\psi(ad_\omega ab_\omega)$ and $\psi(ad_\omega ac_\omega)$. We note however, that if $i_0=1$, then $\psi((ad_\omega ab_\omega)^2)=((d_{\sigma\omega}a)^2,e)=\psi((ad_\omega ac_\omega)^2)$. In this case, we consider $
T_1=\{a, d_{\omega}, c_{\omega}ad_{\omega}, ab_{\omega}ad_{\omega}\}$ and  $T_2=\{ac_{\omega},c_{\omega},b_{\omega}ac_{\omega},ab_{\omega}ac_{\omega}\}$, and proceed analogously.

\smallskip

We now consider the non-periodic Grigorchuk groups~$G=\GG$ with infinite~$N$ such that $\sigma\omega$ is not a constant sequence, and we use the same $T_1$ and $T_2$.
As before, it is clear that $\langle a \rangle \cap  \langle y \rangle^g = \{e\}$ for all $y \in T_2$ and $g\in G(n)$. 
To proceed, we distinguish two cases.


\underline{Case 1}: Suppose first that $i_0,i_1,i_2$ and two of the  $i_0(\sigma\omega), i_1(\sigma\omega), i_2(\sigma\omega)$ are finite. Without loss of generality, suppose  $i_2(\sigma\omega)$ is infinite. This implies that $w_1=2$,  $w_i\in\{0,1\}$ for all $i\ge 2$, and $i_0=i_0(\sigma\omega)+1$, $i_1=i_1(\sigma\omega)+1$. Let $n\ge \widetilde{N}=\max\{i_0,i_1\}+3$. 

Let $x=b_\omega\in\St_{G(n)}(2)\backslash \St_{G(n)}(3)$. 
Suppose $y=ac_\omega$. Here we have two situations to consider. 
If $\omega_i=1$ for all $3\le i_1\le i\le n-1$, then $(ac_\omega)^{2^{i_1}}=\{e\}$ in~$G(n)$, as $\psi_{i_1-1}((ac_\omega)^{2^{i_1-1}})=(a,\overset{2^{i_1-1}}\ldots,a)$ in~$G(n)$. Otherwise $o(ac_\omega)=2^{i_1+1}$. In the latter case
we are done since $(ac_\omega)^{2^{i_1}}\in \St_{G(n)}(3)$. In the former case, if $i_1>3$, we are likewise done. So assume $i_1=3$.
Here $\psi_2((ac_\omega)^4)=(a,a,a,a)$, which clearly cannot be conjugate to $\psi_2(b_\omega)$. Hence \eqref{eq:intersection} holds here.
If $y=c_\omega$, we are done similarly since $c_\omega\notin\St_{G(n)}(2)$.  Suppose $y=ab_\omega$. Here
$\psi_2((ab_\omega)^2)$ has exactly two components consisting of~$a$, however $\psi_2(b_\omega)$ only has one such component. It thus follows that (\ref{eq:intersection}) holds for this choice of $x$ and $y$. Finally let $y=ad_\omega ac_\omega$ and denote its order as before by~$2^t$. The claim follows upon recalling that $\psi_2(b_\omega)$ has exactly one component consisting of~$a$, whereas this is not the case for $\psi_2((ad_\omega a c_\omega)^{2^{t-1}})$. Indeed, if we are not in the case $i_0=2$, $\omega_i=1$ for $3\le i\le n-1$ or $i_1=2$, $\omega_i=0$ for $3\le i\le n-1$, then $(ad_\omega a c_\omega)^{2^{t-1}}\in\St_{G(n)}(3)$. In the remaining cases, we have $\psi_2((ad_\omega a c_\omega)^{2^{t-1}})=(d,d,a,a)$ or $\psi_2((ad_\omega a c_\omega)^{2^{t-1}})=(a,a,c,c)$ respectively, and the result is clear.

Next, we let $x=ad_\omega$. The case $y=c_\omega$ is clear, and for $y=ac_\omega$ we are done as in the periodic case above.
For $y=ab_\omega$, the claim follows from noting that $(ad_\omega)^{2^{i_0}}\in\St_{G(n)}(3)$, if  $o(ad_\omega)=2^{i_0+1}$ in~$G(n)$, whereas $(ab_\omega)^2\notin\St_{G(n)}(3)$. If $o(ad_\omega)=2^{i_0}$ in~$G(n)$, then we are likewise done unless $i_1=2$, and $\omega_i=0$ for $3\le i\le n-1$. In this final case, we have $\psi_2((ad_\omega)^{2^{i_0-1}})=(a,a,a,a)$ and the result is clear. We consider now the remaining element $y=ad_\omega ac_\omega$. Suppose first that $i_1=2$. Then for some $i\ge i_0-1$, writing $2^s$ for the order of $ad_\omega$ in~$G(n)$, we have $\psi_{i}((ad_\omega)^{2^{s-1}})$ has components consisting of the element~$a$ stemming from both maximal subtrees but 
$\psi_{i}((ad_\omega ac_\omega)^{2^{t-1}})$  has components consisting of~$a$ stemming from only one maximal subtree. Similarly for the case  $i_0=2$. 

Finally we set $x=ad_\omega ab_\omega$ and consider $y=ac_\omega$. Let us write $2^v$ for the order of $ac_\omega$. We note that $o(x)=2^{n-1}$ in~$G(n)$. Further  $(ad_\omega ab_\omega)^{2^{n-2}}\in\St_{G(n)}(n-1)$ and $\psi_{n-1}((ad_\omega ab_\omega)^{2^{n-2}})=(e,\overset{2^{n-2}}\ldots, e,a,\overset{2^{n-2}}\ldots, a )$. However, for some $3\le i\le n-1$ we have $(ac_\omega)^{2^{v-1}}\notin\St_{G(n)}(i)$, hence \eqref{eq:intersection} holds. A similar argument follows for $y=ab_\omega$. The next choice of $y=c_\omega\notin\St_{G(n)}(2)$ is straightforward, so it remains to consider 
 $y=ad_\omega a c_\omega$. As
  $(ad_\omega a c_\omega)^{2^{t-1}}\notin\St_{G(n)}(n-1)$, the result follows.
  
  \underline{Case 2}: Suppose  that two of the $i_0,i_1,i_2$ and two of the $i_0(\sigma\omega), i_1(\sigma\omega), i_2(\sigma\omega)$ are finite. Without loss of generality, suppose   $i_2$ is  infinite. This implies that $w_i\in\{0,1\}$ for all $i\in \mathbb{N}$. Further we may assume that $i_1=1$ and then $i_0=i_0(\sigma\omega)+1$. The other case is analogous. 

Let $n\ge \widetilde{N}=\max\{i_0(\sigma\omega),i_1(\sigma\omega)\}+4$, and let 
$x=b_\omega\in\St_{G(n)}(1)\backslash \St_{G(n)}(2)$. If $y=ac_\omega$, we are done since $(ac_\omega)^2\in\St_{G(n)}(2)$.
If $y=c_\omega$, we notice that $\psi(c_\omega)=(e,c_{\sigma\omega})$ whereas $\psi(b_\omega)=(a,b_{\sigma\omega})$.
Hence \eqref{eq:intersection} holds. Suppose $y=ab_\omega$ and note that $o(ab_\omega)=2^n$ in~$G(n)$. As $(ab_\omega)^{2^{n-1}}\in\St_{\GG(n)}(2)$, we are done. Finally let $y=ad_\omega ac_\omega$ and write $2^t$ for its order in~$G(n)$. Here we also have $(ad_\omega a c_\omega)^{2^{t-1}}\in\St_{G(n)}(2)$.

Next, we let $x=ad_\omega$. Here $o(ad_\omega)$ is either $2^{i_0+1}$ or $2^{i_0}$ in~$G(n)$, with the latter happening only when $\omega_i=0$ for all $i_0\le i\le n-1$. Suppose $y=c_{\omega}$. If $o(ad_\omega)=2^{i_0+1}$, then $(ad_\omega)^{2^{i_0}}\in\St_{G(n)}(i_0+1)$ whereas $c_\omega\notin\St_{G(n)}(i_0+1)$. In the remaining case, we have $(ad_\omega)^{2^{i_0-1}}\notin\St_{G(n)}(i_0)$ but $c_\omega\in\St_{G(n)}(i_0)$. Hence
\eqref{eq:intersection} holds. 
For $y=ac_{\omega}$, we proceed likewise. For $y=ab_{\omega}$, we have $\psi_{n-1}((ab_\omega)^{2^{n-1}})=(a,\overset{2^{n-1}}\ldots, a)$, whereas, writing $2^s$ for the order of $ad_\omega$ in~$G(n)$, we have $(ad_\omega)^{2^{s-1}}\notin\St_{G(n)}(n-1)$.
Finally let $y=ad_{\omega}ac_{\omega}$. If $i_0\ge 3$, then $(ad_\omega ac_\omega)^{2^{t-1}}\in\St_{G(n)}(i_0)\backslash \St_{G(n)}(i_0+1)$, whereas, if $o(ad_\omega)=2^{i_0+1}$ then $(ad_{\omega})^{2^{i_0}}\in\St_{G(n)}(i_0+1)$, and if $o(ad_{\omega})=2^{i_0}$ in~$G(n)$, then $(ad_{\omega})^{2^{i_0-1}}\notin\St_{G(n)}(i_0)$. If $i_0=2$, we have $(ad_\omega)^4\in \St_{G(n)}(i_1(\sigma\omega)+1)\backslash \St_{G(n)}(i_1(\sigma\omega)+2)$ with $\psi_{i_1(\sigma\omega)+1}((ad_\omega)^4)$ having exactly four components consisting of~$a$, where these components occur in both maximal subtrees. Now for $(ad_\omega ac_\omega)^{2^{t-1}}$,  there exists a minimal $i\ge i_1(\sigma\omega)$ such that $\psi_i((ad_\omega ac_\omega)^{2^{t-1}})$ has components consisting of~$a$. However these components only occur in the right maximal subtree.

Consider now the remaining case $x=ad_\omega ab_\omega$. First note that $o(x)=2^{n-1}$ in~$G(n)$. Also $(ad_\omega ab_\omega)^{2^{n-2}}\in\St_{G(n)}(n-1)$ and $\psi_{n-1}((ad_\omega ab_\omega)^{2^{n-2}})=(e,\overset{2^{n-2}}\ldots, e,a,\overset{2^{n-2}}\ldots, a )$.  If $y=ac_\omega$, as
$(ac_\omega)^{2}\in \St_{G(n)}(i_0) \backslash \St_{G(n)}(i_0+1)$, the claim \eqref{eq:intersection} holds. A similar argument follows for $y=c_\omega$. Let $y=ab_\omega$. 
As $\psi_{n-1}((ab_\omega)^{2^{n-1}})=(a,\overset{2^{n-1}}\ldots, a )$, the claim follows. 
Now it remains to consider 
 $y=ad_\omega a c_\omega$. As
  $(ad_\omega a c_\omega)^{2^{t-1}}\notin\St_{G(n)}(n-1)$, we are done.

\smallskip

We now prove the last part of the theorem. Note that this then covers all remaining Grigorchuk groups as the case where $i_0, i_1,i_2$ and one of the  $i_0(\sigma\omega), i_1(\sigma\omega), i_2(\sigma\omega)$ are finite cannot exist.

Without loss of generality suppose 
that $w_1=2$ and  $w_i=0$ for all $i\ge 2$. Let $n\ge 4$
and set
$$
T_1=\{a, b_{\omega}, c_{\omega},ad_{\omega}\}\quad\text{and}\quad T_2=\{ab_{\omega},d_{\omega},ab_{\omega}ad_{\omega},ac_{\omega}ab_{\omega}ad_{\omega}\}.
$$
The result is clear for $x=a$, so let  
$x=b_\omega\in\St_{G(n)}(2)\backslash \St_{G(n)}(3)$. For $y=ab_\omega$, we observe that $o(ab_\omega)=4$ in~$G(n)$ and $\psi((ab_\omega)^2)=(b_{\sigma\omega},b_{\sigma\omega})$, whereas $\psi(b_\omega)=(e,b_{\sigma\omega})$. So the result is clear. Next let  $y=d_\omega\in\St_{G(n)}(1)\backslash \St_{G(n)}(2)$, which is also straightforward.   Let $y=ab_\omega ad_{\omega}$, which has order~$2^{n-1}$ in~$G(n)$. Further $(ab_\omega ad_{\omega})^{2^{n-2}}\in\St_{G(n)}(n-1)$, hence the result.
 For $y=ac_{\omega}ab_{\omega}ad_{\omega}$, since $d_{\sigma\omega}=e$ we have
 $\psi\big((ac_{\omega}ab_{\omega}ad_{\omega})^2\big)=(c_{\sigma\omega}ab_{\sigma\omega}a,ab_{\sigma\omega}ac_{\sigma\omega})$. Thus $o(ac_{\omega}ab_{\omega}ad_{\omega})={2^{n-1}}$ in~$G(n)$ and as $(ac_{\omega}ab_{\omega}ad_{\omega})^{2^{n-2}}\in\St_{G(n)}(n-1)$,  the result follows.

Let $x=c_\omega\in\St_{G(n)}(1)\backslash \St_{G(n)}(2)$. For all $y\in T_2$, we are done using the above information; that is, either  $y^{\nicefrac{o(y)}{2}}\in \St_{G(n)}(2)$, or $y=d_\omega$ and since $\psi(d_\omega)=(a,e)$, the result follows.

Lastly, for $x=ad_\omega$,  we have $o(ad_\omega)=4$, and $\psi((ad_\omega)^2)=(a,a)$. Again we are done using the above information.
\end{proof}



\begin{thebibliography}{99}

 \bibitem{BBF}
 N.~Barker, N.~Boston and B.~Fairbairn,
 A note on Beauville $p$-groups,
 \textit{Experiment.\ Math.} \textbf{21} (2012), 298--306.

\bibitem{BBPV} 
 N.~Barker, N.~Boston, N.~Peyerimhoff, and A.~Vdovina,
 Regular algebraic surfaces, ramification structures and projective planes, in \textit{Beauville
surfaces and groups}, Springer Proc. Math. Stat. \textbf{123}, Springer, Cham, 2015.


\bibitem{BBPV2} 
 N.~Barker, N.~Boston, N.~Peyerimhoff, and A.~Vdovina,
 An infinite family of $2$-groups with mixed Beauville structures,
 \textit{Int. Math. Res. Notices} \textbf{11} (2015), 3598--3618.
 
 
\bibitem{Handbook} L.~Bartholdi, R.\,I.~Grigorchuk and Z.~\u{S}uni\'{c}, \textit{Handbook of algebra} \textbf{3}, North-Holland, Amsterdam, 2003.

\bibitem{BSZ} L.~Bartholdi, O.~Siegenthaler and P.~Zalesskii, The congruence subgroup problem for branch groups, \textit{Israel J. Math.} \textbf{187 (1)} (2012), 419--450. 


\bibitem{BCG1} I.\,C.~Bauer, F.~Catanese and F.~Grunewald, Beauville surfaces without real structures, in: \textit{Geometric methods in algebra and number theory},  Progr. Math. \textbf{235}, Birkh\"{a}user Boston, 2005, 1--42.

\bibitem{BCG3} I.\,C.~Bauer, F.~Catanese and F.~Grunewald, Chebycheff and Belyi polynomials,
dessins denfants, Beauville surfaces and group theory, \textit{Mediterr. J. Math.} \textbf{3}
(2006), 121--146.

\bibitem{BCG2} I.\,C.~Bauer, F.~Catanese and F.~Grunewald,  The classification of surfaces with $p_g = q = 0$ isogenous to a product of curves, \textit{Pure Appl. Math. Q.} \textbf{4 (2)} (Special Issue: In honour of Fedor Bogomolov, Part~1) (2008),   547--586.



\bibitem{CF} L.~Carta and B.~Fairbairn, Generalised Beauville groups, arXiv preprint: 1909.03899.


\bibitem{Catanese} F.~Catanese,  Fibred surfaces, varieties isogenous to a product and related moduli spaces, \textit{Amer. J. Math.} \textbf{122} (2000), 1--44.

\bibitem{DGT1}  E.~Di Domenico, \c{S}.~G\"{u}l and A.~Thillaisundaram, Beauville structures for quotients of generalised GGS-groups, in preparation.

\bibitem{DGT2}  E.~Di Domenico, \c{S}.~G\"{u}l and A.~Thillaisundaram, Ramification structures for quotients of multi-EGS groups, arXiv preprint: 2109.04725.

\bibitem{Drum} A.~Dudko and R.\,I.~Grigorchuk, On the question ``Can one hear the shape of a group?" and a Hulanicki type theorem for graphs, \textit{Israel J. Math.} \textbf{237} (2020), 53--74.

\bibitem{erschler}
A.~Erschler and T.~Zheng, Growth of periodic Grigorchuk groups, \textit{Invent.\ Math.} \textbf{219} (2020), 1069--1155. 

\bibitem{fai}
B.~Fairbairn,
Recent work on Beauville surfaces, structures and groups,
in \textit{Groups St Andrews 2013}, 
London Math. Soc. Lecture Note Ser. \textbf{422}, Cambridge University Press, 2015.

\bibitem{fai2}
B.~Fairbairn,
Beauville $p$-groups: a survey, in \textit{Groups St Andrews 2017}, 
London Math. Soc. Lecture Note Ser. \textbf{455}, Cambridge University Press, 2019.

\bibitem{Fairbairn} B.~Fairbairn, Purely (non-)strongly real Beauville groups, \textit{Arch. Math. (Basel)} \textbf{112} (2019), 337--345.


\bibitem{FMP} B.~Fairbairn,  K.~Magaard and C.~Parker, Generation of finite quasisimple groups with an application to groups acting on Beauville surfaces, \textit{Proc. Lond. Math. Soc.} \textbf{107 (4)} (2013), 744--798.



\bibitem{FAG} G.\,A.~Fern\'{a}ndez-Alcober and \c{S}.~G\"{u}l, Beauville structures in finite $p$-groups, \textit{J. Algebra} \textbf{474} (2017), 1--23.	


\bibitem{FAGV} 
G.\,A.~Fern\'{a}ndez-Alcober,  \c{S}.~G\"{u}l and M.~Vannacci,
On the asymptotic behaviour of the number of Beauville and non-Beauville $p$-groups, \textit{Proc. Lond. Math. Soc.} \textbf{120 (2)} (2020), 220--241.

\bibitem{Francoeur} D.~Francoeur, \textit{On maximal subgroups and other aspects of branch groups}, PhD thesis, University of Geneva, 2019.

\bibitem{GP}
S.~Garion and M.~Penegini,
New Beauville surfaces and finite simple groups,
\textit{Manuscripta Math.} \textbf{142} (2013), 391--408.

\bibitem{GP2}
S.~Garion and M.~Penegini,
Beauville surfaces, moduli spaces and finite groups,
\textit{Comm. Algebra} \textbf{42} (2014), 2126--2155.

\bibitem{GZ} M.~Garzon and Y.~Zalcstein, The complexity of Grigorchuk groups with application to cryptography, \textit{Theor. Comput. Sci.} \textbf{88 (1)} (1991), 83--98.

\bibitem{GR} R.\,I.~Grigorchuk, On Burnside's problem on periodic groups, \textit{Funktsional. Anal. i Prilozhen} {\bf 14} (1980), 53--54.

\bibitem{Grigorchuk}  R.\,I.~Grigorchuk, Degrees of growth of finitely generated groups and the theory of invariant means, \textit{Izv. Akad. Nauk SSSR Ser. Mat.} \textbf{48 (5)} (1984), 939--985.


\bibitem{NewHorizons} R.\,I.~Grigorchuk, Just infinite branch groups,
  in: \textit{New horizons in pro-$p$ groups}, Birkh\"auser, Boston,
  2000. 
  
\bibitem{gul}
 \c{S}.~G\"{u}l, On ramification structures for finite nilpotent groups,
 \textit{Hacet. J.  Math. Stat.} (2018), 1--13.
 
 
\bibitem{Gul2} \c{S}.~G\"{u}l, Purely (non-)strongly real Beauville $p$-groups,\textit{Arch. Math. (Basel)} \textbf{115} (2020), 1--11.

\bibitem{GUA} \c{S}.~G\"{u}l and J.~Uria-Albizuri, Grigorchuk-Gupta-Sidki groups as a source for Beauville surfaces, \textit{Groups Geom. Dyn.} \textbf{14 (2)} (2020), 689--704.


\bibitem{GM} R.~Guralnick and G.~Malle,  Simple groups admit Beauville structures, \textit{J. Lond. Math. Soc.} \textbf{85 (3)} (2012),  694--721.

\bibitem{jon}
G.~Jones,
Beauville surfaces and groups: a survey,
in Rigidity and Symmetry, editors R.~Connelly, A.\,I.~Weiss, W.~Whiteley,
\textit{Fields Institute Communications} \textbf{70}, Springer, 2014.

\bibitem{Leonov} Yu.\,G.~Leonov, Conjugacy problem in a class of $2$-groups, \textit{Math. Notes} \textbf{64 (4)} (1998), 496--505.

\bibitem{MP1} R.~Muchnik and I.~Pak, On growth of Grigorchuk groups, \textit{Internat. J. Algebra Comput.} \textbf{11 (1)} (2001), 1--17.

\bibitem{MP2} R.~Muchnik and I.~Pak, Percolation on Grigorchuk groups, \textit{Comm. Algebra} \textbf{29 (2)} (2001), 661--671.

\bibitem{Pervova3} E.\,L.~Pervova, Everywhere dense subgroups of a
  group of tree automorphisms, \textit{Tr. Mat. Inst. Steklova}
  \textbf{231} (Din. Sist., Avtom. i. Beskon. Gruppy) (2000),
  356--367.

\bibitem{Pervova_AT} E.\,L.~Pervova, The congruence property of AT-groups, \textit{Algebra Logic} \textbf{41} (2002), 306--313.

\bibitem{Pervova} E.\,L.~Pervova, Profinite topologies in just infinite
  branch groups, preprint 2002-154 of the Max Planck Institute for Mathematics, Bonn, Germany.
  
 \bibitem{Petrides} G.~Petrides, Cryptanalysis of the public key cryptosystem based on the word problem on the Grigorchuk groups, 9th IMA International Conference on Cryptography and Coding, \textit{Springer Lect. Notes in Comp. Sci.} \textbf{2898} (2003), 234--244.
  
 \bibitem{SV} 
J.~Stix and A.~Vdovina, 
Series of $p$-groups with Beauville structure,
\textit{Monatsh. Math.} \textbf{181} (2016), 177--186.

\end{thebibliography}
\end{document}